\newcommand{\TT}{\mathbb{T}}
\newcommand{\CC}{\mathbb{C}}
\newcommand{\ZZ}{\mathbb{Z}}
\newcommand{\NN}{\mathbb{N}}
\newcommand{\RR}{\mathbb{R}}
\newcommand{\PP}{\mathbb{P}}
\newcommand{\QQ}{\mathbb{Q}}
\newcommand{\calF}{\mathcal{F}}
\newcommand{\calO}{\mathcal{O}}
\newcommand{\calM}{\mathcal{M}}
\newcommand{\calB}{\mathcal{B}}
\newcommand{\calA}{\mathcal{A}}
\newcommand{\scrR}{\mathscr{R}}
\newcommand{\scrA}{\mathscr{A}}
\newcommand{\pl}[1]{\foreignlanguage{polish}{#1}}
\newcommand{\abs}[1]{\lvert {#1} \rvert}
\newcommand{\ind}[1]{{\mathds{1}_{{#1}}}}
\renewcommand{\atop}[2]{\genfrac{}{}{0pt}2{#1}{#2}}
\newtheorem{theorem}{Theorem}[section]
\newtheorem{proposition}[theorem]{Proposition}
\newtheorem{lemma}[theorem]{Lemma}
\newtheorem{corollary}[theorem]{Corollary}
\newtheorem{claim}[theorem]{Claim}
\newtheorem*{theorem*}{Theorem}
\theoremstyle{plain}
\newcounter{thm}
\newtheorem{main_theorem}[thm]{Theorem}
\theoremstyle{definition}
\title[Endpoint estimates]
{Endpoint estimates for the maximal function over prime numbers}
\author{Bartosz Trojan}
\address{
	\pl{
	Bartosz Trojan\\
	Institute of Mathematics of Polish Academy of Science\\
	ul. \'Sniadeckich 8\\
	00-656 Warszawa\\
	Poland}
}
\email{btrojan@impan.pl}
\begin{document}
\selectlanguage{english}

\begin{abstract}
	Given an ergodic dynamical system $(X, \calB, \mu, T)$, we prove that for each function $f$ belonging to the
	Orlicz space $L(\log L)^2(\log \log L)(X, \mu)$, the ergodic averages
	\[
		\frac{1}{\pi(N)} \sum_{p \in \PP_N} f\big(T^p x\big),
	\]
	converge for $\mu$-almost all $x \in X$, where $\PP_N$ is the set of prime numbers not larger that $N$ and
	$\pi(N) = \# \PP_N$.
\end{abstract}

\keywords{weak maximal ergodic inequality, Orlicz space, prime numbers, pointwise convergence}
\subjclass[2010]{Primary: 37A45. Secondary: 46E30, 42B25.}

\maketitle

\section{Introduction}
Let $(X, \calB, \mu, T)$ be an ergodic dynamical system, that is $(X, \calB, \mu)$ is a probability space with
a measurable and measure preserving transformation $T: X \rightarrow X$. The classical Birkhoff theorem \cite{birk} 
states that for any function $f$ from $L^p(X, \mu)$ with $p \in [1, \infty)$, the ergodic averages 
\[
	\frac{1}{N}\sum_{n = 0}^{N-1} f\big(T^n x\big)
\]
converge for $\mu$-almost all $x \in X$. This classical result, among others, motivates studying ergodic averages
over subsequences of integers. In this article we are interested in pointwise convergence of the following averages,
\[
    \scrA_N f(x) = \frac{1}{\pi(N)} \sum_{p \in \PP_N} f\big(T^p x \big)
\]
where $\PP_N$ is the set of prime numbers not larger than $N$ and $\pi(N) = \# \PP_N$. The problem of ergodic averages
along prime numbers was initially studied by Bourgain in \cite{bou-p} where the case of functions belonging to
$L^2(X, \mu)$ has been covered. It was extended by Wierdl in \cite{wrl} to all $L^p(X, \mu)$, for $p > 1$, see also
\cite[Section 9]{bou}. However, the endpoint $p = 1$, was left open for more than twenty years. Following the method
developed in \cite{BM} by Buczolich and Mauldin, LaVictoire in \cite{LaV} has shown that for each ergodic dynamical
system there exists $f \in L^1(X, \mu)$ such that the sequence $(\scrA_N f : N \in \NN)$ diverges on a set of positive
measure. 

The purpose of this article is to find an Orlicz space close to $L^1(X, \mu)$ where the almost
everywhere convergence holds. We show the following theorem (see Theorem \ref{thm:4}).
\begin{main_theorem}
	\label{main_thm:1}
	For each $f \in L(\log L)^2(\log \log L)(X, \mu)$, the limit
	\[
		\lim_{N \to \infty} \scrA_N f(x)
	\]
	exists for $\mu$-almost all $x \in X$.
\end{main_theorem}
In light of the pointwise convergence obtained by Bourgain in \cite{bou1}, see also \cite{mtz}, to prove Theorem
\ref{main_thm:1} it suffices to show the weak maximal ergodic inequality for functions in Orlicz space
$L(\log L)^2(\log \log L)(X, \mu)$. This inequality is deduce from the following restricted weak Orlicz estimate.
\begin{main_theorem}
	\label{main_theorem:3}
	There is $C > 0$ such that for any subset $A \subset X$, 
	\[
		\mu\Big\{x \in X : \sup_{N \in \NN} \scrA_N\big(\ind{A}\big)(x) > \lambda \Big\}
		\leq
		C \lambda^{-1} \log^2(e/\lambda) \mu(A)
	\]
	for all $1 > \lambda > 0$.
\end{main_theorem}
By appealing to the Calder\'on transference principle, see \cite{cald}, Theorem \ref{main_theorem:3} is deduced from
the corresponding result for integers $\ZZ$ with the counting measure and the shift operator. To be more precise, for a
function $f: \ZZ \rightarrow \CC$, we define
\[
	\calA_N f(x) = \frac{1}{\pi(N)} \sum_{p \in \PP_N} f(x+p).
\]
Our main result is following theorem (see Theorem \ref{thm:5}). 
\begin{main_theorem}
	\label{main_thm:2}
	There is $C > 0$ such that for any subset $F \subset \ZZ$ of a finite cardinality
	\[
		\Big|\Big\{
		x \in \ZZ : \sup_{N \in \NN} \calA_N \big(\ind{F} \big)(x) > \lambda
		\Big\}\Big|
		\leq
		C
		\lambda^{-1} \log^2(e/\lambda) \abs{F}
	\]
	for all $0 < \lambda < 1$.
\end{main_theorem}
Theorem \ref{main_thm:2} together with $\ell^2(\ZZ)$ estimates are sufficiently strong to imply the maximal inequality for
all $\ell^p(\ZZ)$ spaces, for $p > 1$, giving an alternative proof of the Wierld's theorem \cite{wrl}.

Let us now give some details about the proof of Theorem \ref{main_thm:2}. Without loss of generality, we may restrict the
supremum to dyadic numbers. It is more convenient to work with weighted averages $\calM_N f$ instead of $\calA_N f$
where
\[
	\calM_N f(x) = \frac{1}{\vartheta(N)} \sum_{p \in \PP_N} f(x+p) \log p,
\]
and
\[
	\vartheta(N) = \sum_{p \in \PP_N} \log p.
\]
Given $t > 0$, for each $n \in \NN$, we decompose the operator $\calM_{2^n}$ into two parts $A_n^t$ and $B_n^t$,
in such a way that the maximal function associated with $A_n^t$ has $\ell^{1,\infty}(\ZZ)$ norm
$\lesssim t \|f\|_{\ell^1}$, whereas the one corresponding to $B_n^t$ has $\ell^2(\ZZ)$ norm
$\lesssim \exp\big(-c \sqrt{t}\big) \|f\|_{\ell^2}$. When applied to the distribution function
$\big|\big\{\sup_{n \in \NN} \calM_{2^n}(\ind{F}) > \lambda \big\}\big|$, we can optimize both estimates by taking
$t \simeq \log^2(e/\lambda)$. This idea originated to Ch. Fefferman \cite{feff}, see also Bourgain \cite{bou3}. 
Ionescu introduced this technique in a related discrete context, see \cite{I}.
The decomposition of $\calM_{2^n}$ uses the circle method of Hardy and Littlewood. However,
to achieve the exponential decay of the error term, due to the Page's theorem, the approximating multiplier has to 
contain the second term of the asymptotic as well. Thus, the possible existence of the Siegel zero entails that in
the neighborhood of the rational point $a/q$ the approximating multiplier $\widehat{L^{a, q}_{2^n}}(\cdot - a/q)$ 
depends on the rational number $a/q$. We refer to Sections \ref{sec:4} and \ref{sec:2} for details. Thanks to the
log-convexity of $\ell^{1, \infty}(\ZZ)$, the weak type estimates are reduced to showing
\[
	\Big|\Big\{
	x \in \ZZ:
	\sup_{t \leq n}
	\Big|
	\sum_{a \in A_q}
	\calF^{-1}\big(\widehat{L^{a, q}_{2^n}}(\cdot - a/q) \eta_s(\cdot - a/q) \hat{f} \big)(x)
	\Big| > \lambda
	\Big\}\Big|
	\leq
	C \frac{1}{\lambda \varphi(q)} \|f\|_{\ell^1}
\]
for $2^s \leq q < 2^{s+1}$ with $1 \leq s \leq \sqrt{t}$. At this stage we exploit the behavior of the Gauss sums
described in Theorem \ref{thm:7}.

Let us emphasize that under the Generalized Riemann Hypothesis we can obtain in Proposition
\ref{prop:1}, and consequently in Theorem \ref{thm:1}, a better error estimate. However, it is not clear whether one can
prove Theorem \ref{thm:3} with the bounds proportional to $\sqrt{t} \|f\|_{\ell^1}$.

The paper is organized as follows. In Section \ref{sec:3}, we collect necessary facts about Dirichlet characters
and the zero-free region. Then we evaluate the Gauss sum that appears in the approximating multiplier
(Theorem \ref{thm:7}). Section \ref{sec:4} is devoted to construction of the approximating multipliers. In Sections
\ref{sec:2} and \ref{sec:1}, we show $\ell^2$ and the weak type estimates, respectively. In Section \ref{sec:6},
we give two applications of Theorem \ref{main_thm:2}. Namely, we show how to deduce the maximal ergodic inequality for
functions from $\ell^p(\ZZ)$, (Theorem \ref{thm:8}). Next we apply the transference principle 
(Proposition \ref{prop:2}) and show almost everywhere convergence of the ergodic averages $(\scrA_N f : N \in \NN)$
for $f \in L(\log L)^2(\log \log L)(X, \mu)$, (Theorem \ref{thm:4}).

\subsection*{Notation}
Throughout the whole article, we write $A \lesssim B$ ($A \gtrsim B$) if there is an absolute constant $C>0$ such that
$A\le CB$, ($A\ge CB$). Moreover, $C$ stands for a large positive constant which value may vary from occurrence to
occurrence. If $A \lesssim B$ and $A\gtrsim B$ hold simultaneously then we write $A \simeq B$. The set of positive integers
and the set of prime numbers are denoted by $\NN$ and $\PP$, respectively. For $x > 0$, we set $\ZZ_x = [1, x] \cap \NN$.
Let $\NN_0 = \NN \cup \{0\}$. 

\section{Gauss sums}
\label{sec:3}
We start by recalling some basic facts from number theory. A general reference here is the book \cite{mv}.

A homomorphism
\[
	\chi: \big(\ZZ / q\ZZ\big)^\times \rightarrow \CC^\times,
\]
is called a Dirichlet character modulo $q$. The simplest example, called the \emph{principal} character modulo $q$,
is defined as
\[
	\mathds{1}_q(x) = 
	\begin{cases}
		1 & \text{if } \gcd(x, q) = 1, \\
		0 & \text{otherwise.}
	\end{cases}
\]
A character $\chi$ modulo $q$ is \emph{primitive}, if $q$ is the least integer $d$, such that $\chi(m) = \chi(n)$ for all
$m \equiv n \pmod d$ and $(mn, q) = 1$. For each character $\chi$ there is the unique primitive character $\chi^\star$
modulo $q_0$ for some $q_0 \mid q$, such that
\[
	\chi(n) = 
	\begin{cases}
		\chi^\star(n) & \text{if } (n, q) = 1, \\
		0 & \text{otherwise}.
	\end{cases}
\]
The character is \emph{quadratic} if it takes only values $\{-1, 0, 1\}$ with at least one $-1$. Recall that, if
$\chi^\star$ is a primitive quadratic character with modulus $q_0$, then 
\begin{itemize}
	\item $q_0 \equiv 1 \pmod 4$, and $q_0$ is square-free, or
	\item $4 \mid q_0$, $q_0/4 \equiv 2 \text{ or } 3 \pmod 4$, and $q_0/4$ is square-free. 
\end{itemize}
Given a Dirichlet character $\chi$ and $s \in \CC$ with $\Re s > 1$, we define the Dirichlet $L$-function by the formula
\[
	L(s, \chi) = \sum_{n \geq 1} \frac{\chi(n)}{n^s}.
\]
In fact, $L(\: \cdot\:, \chi)$ extends to the analytic function in $\{z \in \CC : \Re z > 0\}$. There is an absolute
constant $c > 0$, such that if $\chi$ is a Dirichlet character modulo $q$, then the region
\begin{equation}
	\label{eq:9}
	\Big\{
	z \in \CC : 1 - \frac{c}{\log q} < \Re z < 1
	\Big\}
\end{equation}
contains at most one zero of $L(\: \cdot \:, \chi)$, which we denote by $\beta_q$. The zero $\beta_q$ is real and the
corresponding character is quadratic. The character having zero in \eqref{eq:9} is called \emph{exceptional}. Since
$L(\beta, \chi) = 0$ implies that $L(1-\beta, \chi) = 0$, we may assume that $\frac{1}{2} \leq \beta_q < 1$.

The Gauss sum of a Dirichlet character $\chi$ modulo $q$ is defined as
\[
	G(\chi, n) = \frac{1}{\varphi(q)} \sum_{r \in A_q} \chi(r) e^{2\pi i r n / q}
\]
where $A_q = \big\{1 \leq a \leq q : \gcd(a, q) = 1\big\}$, and $\varphi(q) = \# A_q$. Let us recall that for each
$\epsilon > 0$ there is $C_\epsilon > 0$ such that
\begin{equation}
	\label{eq:35}
	\varphi(q) \geq C_\epsilon q^{1 - \epsilon}.
\end{equation}
We set
\[
    \tau(\chi) = \varphi(q) G(\chi, 1).
\]
Let us denote by $\mu$ the M\"obious function, which is defined for $q = p_1^{\alpha_1} \dots p_n^{\alpha_n}$,
where $p_1, \ldots, p_n$ are distinct primes, as
\[
	\mu(q) = 
	\begin{cases}
		(-1)^n & \text{if }\alpha_1 = \ldots = \alpha_n = 1, \\
		0 & \text{otherwise,}
	\end{cases}
\]
and $\mu(1) = 1$. The following theorem plays the crucial role in Section \ref{sec:1}.
\begin{theorem}
	\label{thm:7}
	Let $\chi$ be a quadratic Dirichlet character modulo $q$ induced by $\chi^\star$ having the conductor $q_0$.
	For $x \in \ZZ$, we set $r = \gcd(q, x)$. Then
	\[
		\sum_{a \in A_q} G(\chi, a) e^{2\pi i x a/q} = \mu(r) q_0 \frac{\varphi(r)}{\varphi(q)} \chi^\star(-x)
	\]
	provided that $q/q_0$ is square-free, $\gcd(q/q_0, q_0) = 1$ and $r \mid q/q_0$. Otherwise the sum equals zero.
\end{theorem}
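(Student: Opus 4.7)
The plan is to reduce the double sum to a product of two Gauss-type expressions and evaluate each using standard multiplicative tools. The first step is a symmetry reduction: for each $a \in A_q$, the substitution $r \mapsto a^{-1} r$ in the defining sum for $G(\chi, a)$ yields $G(\chi, a) = \overline{\chi(a)} G(\chi, 1)$, and since $\chi$ is quadratic we have $\overline{\chi(a)} = \chi(a)$. Hence
\[
\sum_{a \in A_q} G(\chi, a) e^{2 \pi i x a/q} = G(\chi, 1) \sum_{a \in A_q} \chi(a) e^{2 \pi i x a/q} = \varphi(q) G(\chi, 1) G(\chi, x),
\]
so everything reduces to evaluating $\varphi(q) G(\chi, n)$ at $n = 1$ and $n = x$.

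Writing $q = q_0 m$ with $m = q/q_0$, I would derive a general formula for $\varphi(q) G(\chi, n)$ as follows. Using $\chi(r) = \chi^\star(r) \ind{(r,m)=1}$, detecting $(r,m)=1$ by Möbius inversion, changing variable $r = d r'$ for each $d \mid m$, and splitting $r'$ by residue modulo $q_0$, one is left with a geometric sum over $m/d$ representatives that forces $m/d \mid n$, followed by a genuine Gauss sum for the primitive character $\chi^\star$. Setting $e = m/d$, the primitivity sum vanishes unless $\gcd(n/e, q_0) = 1$, in which case it equals $\chi^\star(n/e) \tau(\chi^\star)$. Collecting terms yields
\[
\varphi(q) G(\chi, n) = \tau(\chi^\star) \sum_{\substack{e \mid \gcd(m, n) \\ \gcd(n/e, q_0) = 1}} \mu(m/e) \chi^\star(m/e) \, e \, \chi^\star(n/e).
\]

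With this in hand, specialization is mechanical. At $n = 1$ only $e = 1$ contributes, giving $\varphi(q) G(\chi, 1) = \mu(m) \chi^\star(m) \tau(\chi^\star)$, which vanishes unless $m$ is square-free and coprime to $q_0$; these are the first two \emph{otherwise} cases. For $n = x$, since $e \mid m$ and $\gcd(m, q_0) = 1$, one has $\gcd(x/e, q_0) = \gcd(x, q_0)$, and this equals $1$ precisely when the hypothesis $r \mid m$ holds, covering the third \emph{otherwise} case. Under all three hypotheses $\gcd(x, m) = r$, and using $\chi^\star(e)^2 = 1$ together with $\mu(m/e) = \mu(m) \mu(e)$ for $e \mid r$ with $r$ square-free, the sum collapses to
\[
\varphi(q) G(\chi, x) = \tau(\chi^\star) \chi^\star(mx) \mu(m) \sum_{e \mid r} \mu(e) e = \tau(\chi^\star) \chi^\star(mx) \mu(m) \mu(r) \varphi(r),
\]
the last equality coming from the multiplicativity of $r \mapsto \sum_{e \mid r} \mu(e) e$ with prime values $1 - p$. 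Multiplying the two formulas, invoking $\chi^\star(m)^2 = \mu(m)^2 = 1$ and the classical identity $\tau(\chi^\star)^2 = \chi^\star(-1) q_0$ for real primitive characters, one arrives at the claimed value $\mu(r) q_0 \tfrac{\varphi(r)}{\varphi(q)} \chi^\star(-x)$.

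The main obstacle is the bookkeeping in the derivation of the general formula for $\varphi(q) G(\chi, n)$: one must simultaneously juggle the Möbius inversion detecting coprimality to $m$, the character's behaviour under the dilation $r = dr'$, and the vanishing produced by the geometric sum, while tracking precisely which residues make the primitive Gauss sum over $q_0$ collapse. Once that formula is in place, the specialization to $n = 1$ and $n = x$ and the simplification of the product are routine multiplicative identities.
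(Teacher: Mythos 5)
Your proof is correct and follows essentially the same route as the paper: both arguments reduce the double sum to the product of $\tau(\chi^\star)^2$ with the Ramanujan-type factor $\mu(r)\varphi(r)/\varphi(q/r)$, the paper by citing the evaluation of imprimitive Gauss sums (Montgomery--Vaughan, Theorem 9.12) together with its special case $G(\chi,a)=\frac{\mu(q/q_0)}{\varphi(q)}\chi^\star(a)\chi^\star(q/q_0)\tau(\chi^\star)$, and you by re-deriving that same evaluation from scratch via M\"obius inversion. The only difference is that your argument is self-contained where the paper invokes the reference; your specializations at $n=1$ and $n=x$, the treatment of the three vanishing cases, and the final simplification using $\tau(\chi^\star)^2=\chi^\star(-1)q_0$ all check out.
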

\begin{proof}
	By \cite[Theorem 9.12]{mv}, if $r \mid q/q_0$ then
	\begin{equation}
		\label{eq:37}
		\sum_{a \in A_q} \chi(a) e^{2\pi i a x/q} =
		\frac{\varphi(q)}{\varphi(q/r)} \chi^\star\big(x/r\big)
		\chi^\star\big(q/(r q_0)\big) \mu\big(q/(r q_0)\big) \tau(\chi^\star),
	\end{equation}
	otherwise the sum equals zero. In particular, for $a \in A_q$, we have
	\begin{equation}
		\label{eq:34}
    	G(\chi, a) = \frac{\mu(q/q_0)}{\varphi(q)} \chi^\star(a) \chi^\star(q/q_0) \tau(\chi^\star).
	\end{equation}
	Hence, $G(\chi, a) \neq 0$ entails that $q/q_0$ is square-free and $\gcd(q/q_0, q_0) = 1$. Next, using
	\eqref{eq:34} and \eqref{eq:37} we get
	\begin{align*}
		\sum_{a \in A_q} G(\chi, a) e^{2\pi i x a/q}
		&= \frac{\mu(q/q_0)}{\varphi(q)} \chi^\star(q/q_0) \tau(\chi^\star)
		\sum_{a \in A_q} \chi(a) e^{2\pi i x a /q} \\
		&= \frac{\mu(r)}{\varphi(q/r)} \chi^\star(q/q_0) 
		\chi^\star\big(x/r\big)
        \chi^\star\big(q/(r q_0)\big) 
		\tau(\chi^\star)^2 \\
		&=
		\frac{\mu(r)}{\varphi(q/r)}
		\chi^\star(x) \tau(\chi^\star)^2.
	\end{align*}
	Because $\abs{\tau(\chi^\star)} = \sqrt{q_0}$, we have $\tau(\chi^\star)^2 = q_0 \chi^\star(-1)$. Hence,
	\begin{equation}
		\label{eq:40}
		\sum_{a \in A_q} G(\chi, a) e^{2\pi i x a/q} = \frac{\mu(r)}{\varphi(q/r)} \chi^\star(-x) q_0.
	\end{equation}
	Finally, since $q/q_0$ is square-free, $\gcd(q/q_0, q_0) = 1$ and $r \mid q/q_0$, we deduce that $\gcd(q/r, r) = 1$.
	Therefore,
	\[
		\varphi(q/r) \varphi(r) = \varphi(q),
	\]
	which together with \eqref{eq:40} completes the proof.
\end{proof}
Let us observe that the identity \eqref{eq:34} together with \eqref{eq:35} imply that
\begin{equation}
	\label{eq:15}
	\big|G(\chi, a)\big| \leq \frac{\sqrt{q_0}}{\varphi(q)} \leq C_\epsilon q^{-\frac{1}{2}+\epsilon}.
\end{equation}
for any $\epsilon > 0$. Moreover, $G(\chi, a) \neq 0$ entails that $q$ is square-free or $4 \mid q$ and $q/4$
is square-free.

\section{Approximating multipliers}
\label{sec:4}
Let us denote by $\calA_N$ the averaging operator over prime numbers, that is for a function $f: \ZZ \rightarrow \CC$
we have
\[
	\calA_N f(x) = \frac{1}{\pi(N)} \sum_{p \in \PP_N} f(x+p)
\]
where $\PP_N = [1, N] \cap \PP$ and $\pi(N) = \# \PP_N$. Since sums over primes are very irregular, it is more
convenient to work with
\[
	\calM_N f(x)=\frac{1}{\vartheta(N)} \sum_{p \in \PP_N} f(x+p) \log p
\]
where
\[
	\vartheta(N) = \sum_{p \in \PP_N} \log p.
\]
By the partial summation, we easily see that
\begin{align*}
	\sum_{p \in \PP_N} f(x+p)
	&= \sum_{n=2}^N \Big(\vartheta(n) \calM_n f(x) - \vartheta(n-1) \calM_{n-1}f(x)\Big) \frac{1}{\log n} \\
	&= \vartheta(N) \calM_N f(x) \frac{1}{\log N} 
	+ \sum_{n = 2}^{N-1} \vartheta(n) \calM_n f(x) \bigg(\frac{1}{\log n} - \frac{1}{\log(n+1)}\bigg),
\end{align*}
thus
\begin{align}
	\nonumber
	\big| \calA_N f(x) \big|
	&\leq
	\sup_{N' \in \NN} \big| \calM_{N'} f(x) \big|
	\frac{1}{\pi(N)} 
	\bigg(\vartheta(N) \frac{1}{\log N} + \sum_{n = 2}^{N-1} \vartheta(n) 
	\bigg(\frac{1}{\log n} - \frac{1}{\log(n+1)}\bigg) \bigg) \\
	\label{eq:39}
	&\leq
	\sup_{N' \in \NN} \big| \calM_{N'} f(x) \big|.
\end{align}
To better understand the operators $\calM_N$, we use the Hardy--Littlewood circle method. Let $\calF$ denote the Fourier
transform on $\RR$ defined for any function $f \in L^1(\RR)$ as
\[
	\calF f(\xi) = \int_\RR f(x) e^{-2\pi i \xi x} {\: \rm d} x.
\]
If $f \in \ell^1(\ZZ)$, we set
\[
	\hat{f}(\xi) = \sum_{n \in \ZZ} f(n) e^{-2\pi i \xi n}.
\]
To simplify the notation we denote by $\mathcal F^{-1}$ the inverse Fourier transform on $\RR$ or the inverse Fourier
transform on the torus $\TT\equiv[0, 1)$, depending on the context. Let $\mathfrak{m}_N$ be the Fourier multiplier
corresponding to $\calM_N$, i.e.,
\begin{equation}
	\label{eq:29}
	\mathfrak{m}_N(\xi) = \frac{1}{\vartheta(N)} \sum_{p \in \PP_N} e^{2\pi i \xi p} \log p.
\end{equation}
Then for a finitely supported function $f: \ZZ \rightarrow \CC$, we have
\[
	\calM_N f(x) = \calF^{-1}\big(\mathfrak{m}_N \hat{f} \big)(x).
\]
For $\frac{1}{2} \leq \beta \leq 1$, we set
\begin{equation}
	\label{eq:43}
	M_N^\beta = \frac{1}{N} \sum_{n = 1}^N \frac{n^\beta - (n-1)^\beta}{\beta} \delta_n. 
\end{equation}
To simplify the notation we write $M_N$ for $M_N^1$. Let $M_0 \equiv 0$. Recall that
\begin{equation}
	\label{eq:45}
	\big|\widehat{M_N}(\xi) \big| \lesssim \min\Big\{\big(N \abs{\xi}\big)^{-1}, N\abs{\xi}\Big\}.
\end{equation}
For $\beta < 1$, we notice that the operators $M_N^\beta$ are not averaging operators. Moreover, by the partial summation
and \eqref{eq:45}, we get
\begin{align*}
	\big|\widehat{M_N^\beta} (\xi) \big| 
	&= \frac{1}{\beta N} \Big|\sum_{n=1}^N \big(n \widehat{M_n}(\xi) - (n-1) \widehat{M_{n-1}}(\xi)\big) 
	\big(n^\beta - (n-1)^\beta\big)\Big| \\
	&\lesssim
	\big(N \abs{\xi} \big)^{-1} 
	N^{\beta-1} + \big(N\abs{\xi}\big)^{-1} \sum_{n = 1}^{N-1} 
	\big(2 n^\beta - (n-1)^\beta - (n+1)^\beta\big) \\
	&\lesssim
	\big(N \abs{\xi} \big)^{-1} N^{\beta-1}
	+
	\big(N\abs{\xi}\big)^{-1} \sum_{n = 1}^{N-1} n^{\beta-2}
	.
\end{align*}
Hence,
\begin{equation}
	\label{eq:14}
	\big|\widehat{M_N^\beta}(\xi) \big| \lesssim \big(N \abs{\xi}\big)^{-1}.
\end{equation}
Moreover, 
\[
	\big|\widehat{M_N^\beta}(\xi) - \beta^{-1} N^{\beta-1} \big| \lesssim N^\beta \abs{\xi}, 
\]
thus
\begin{align*}
	\big|\widehat{M_N^{\beta}}(\xi) - \widehat{M_{2N}^{\beta}}(\xi)\big| 
	&\lesssim
	\big|\widehat{M_N^{\beta}}(\xi) - \beta^{-1} N^{\beta-1} \big|
	+
	\big|\widehat{M_{2N}^\beta}(\xi) - \beta^{-1} (2N)^{\beta-1}\big|
	+
	\big|\beta^{-1} N^{\beta-1} + \beta^{-1} (2N)^{\beta-1}\big| \\
	&\lesssim 
	N^\beta \abs{\xi} + (1-\beta) N^{\beta-1}.
\end{align*}
Therefore,
\begin{align}
	\nonumber
	\big|\widehat{M_N^{\beta}}(\xi) - \widehat{M_{2N}^{\beta}}(\xi)\big|
	&\lesssim
	\min\Big\{(N \abs{\xi})^{-1}, N^\beta \abs{\xi} + (1-\beta) N^{\beta-1} \Big\} \\
	\label{eq:24}
	&\lesssim
	\min\Big\{(N \abs{\xi})^{-1}, N \abs{\xi} \Big\} + (1-\beta) N^{\beta-1}.
\end{align}
Given $q \in \NN$, and $a \in A_q$, we set
\begin{equation}
	\label{eq:41}
	L_N^{a, q} = G(\mathds{1}_q, a) M_N,
\end{equation}
if there is no exceptional character modulo $q$, and
\begin{equation}
	\label{eq:42}
	L_N^{a, q} = G(\mathds{1}_q, a) M_N - G(\chi_q, a) M_N^{\beta_q},
\end{equation}
when there is an exceptional character $\chi_q$ modulo $q$ and $\beta_q$ is the corresponding zero.

\begin{proposition}
	\label{prop:1}
	There is $c > 0$ such that if $\xi \in \TT$,
	\[
		\bigg| \xi - \frac{a}{q} \bigg| \leq N^{-1} Q
	\]
	for some $1 \leq q \leq Q$, $a \in A_q$, and $1 \leq Q \leq \exp\big(c\sqrt{\log N}\big)$, then
	\[
		\mathfrak{m}_N (\xi) 
		= \widehat{L^{a,q}_N}(\xi - a/q) 
		+\calO\Big(Q \exp\big(-c\sqrt{\log N}\big)\Big).
	\]
\end{proposition}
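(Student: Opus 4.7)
The idea is the Hardy--Littlewood circle method with Siegel--Walfisz refined by Page's treatment of the exceptional zero as the key analytic input. Writing $\theta = \xi - a/q$ so that $|\theta| \leq Q/N$, I would rely on the asymptotic: uniformly for $1 \leq q \leq \exp(c\sqrt{\log x})$ and $r \in A_q$,
\[
    \psi(x; q, r) = \frac{x}{\varphi(q)} - \frac{\chi_q(r) x^{\beta_q}}{\varphi(q) \beta_q} + \calO\bigl(x \exp(-c\sqrt{\log x})\bigr),
\]
where $\psi$ is the Chebyshev function and the middle term is omitted when no exceptional character modulo $q$ exists. This is precisely the correction that the $M_N^{\beta_q}$ piece in \eqref{eq:42} is designed to absorb.

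First I would split $\vartheta(N) \mathfrak{m}_N(\xi)$ into residue classes mod $q$: primes $p \mid q$ number $\lesssim \log Q$ and together contribute $\lesssim \log Q \log N$, which is negligible after dividing by $\vartheta(N) \simeq N$; passing from primes to the von Mangoldt sum costs a further $\calO(\sqrt N \log N)$ from higher prime powers. Using $e^{2\pi i \xi p} = e^{2\pi i a r/q} e^{2\pi i \theta p}$ for $p \equiv r \pmod q$, the problem reduces to analyzing
\[
    \sum_{r \in A_q} e^{2\pi i a r/q} \sum_{\substack{n \leq N \\ n \equiv r \pmod q}} \Lambda(n) e^{2\pi i \theta n}.
\]

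Abel summation against the asymptotic for $\psi(\,\cdot\,; q, r)$ yields, for each $r \in A_q$,
\[
    \sum_{\substack{n \leq N \\ n \equiv r \pmod q}} \Lambda(n) e^{2\pi i \theta n}
    = \frac{N \widehat{M_N}(\theta)}{\varphi(q)} - \frac{\chi_q(r) N \widehat{M_N^{\beta_q}}(\theta)}{\varphi(q)} + \calO\bigl((1+|\theta| N) N \exp(-c\sqrt{\log N})\bigr),
\]
where the identifications $\sum_{n=1}^N e^{2\pi i \theta n} = N \widehat{M_N}(\theta)$ and $\sum_{n=1}^N \beta_q^{-1}(n^{\beta_q} - (n-1)^{\beta_q}) e^{2\pi i \theta n} = N \widehat{M_N^{\beta_q}}(\theta)$ come from \eqref{eq:43}. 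Multiplying by $e^{2\pi i a r/q}$, summing over $r \in A_q$, and invoking
\[
    \sum_{r \in A_q} e^{2\pi i a r/q} = \varphi(q) G(\mathds{1}_q, a), \qquad \sum_{r \in A_q} \chi_q(r) e^{2\pi i a r/q} = \varphi(q) G(\chi_q, a),
\]
collapses the main term to $N G(\mathds{1}_q, a) \widehat{M_N}(\theta) - N G(\chi_q, a) \widehat{M_N^{\beta_q}}(\theta)$. Dividing by $\vartheta(N) = N(1 + \calO(\exp(-c\sqrt{\log N})))$ and invoking \eqref{eq:41}--\eqref{eq:42} produces exactly $\widehat{L_N^{a,q}}(\theta)$. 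The pointwise error $\calO((1+Q) N \exp(-c\sqrt{\log N}))$, summed over the $\varphi(q) \leq Q$ residues and renormalized, becomes $\calO(Q^2 \exp(-c\sqrt{\log N}))$, and since $Q \leq \exp(c\sqrt{\log N})$ the extra factor of $Q$ is absorbed into the exponential at the cost of a slightly smaller constant $c'$.

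The main obstacle is the bookkeeping around the exceptional character: the $\chi_q$-twisted secondary term must appear with the exact normalization dictated by Page's theorem, because otherwise the best error is of the shape $\exp(-c(\log N)^\alpha)$ with $\alpha < 1/2$, which would be too weak to give a useful regime of $Q$. The second technical point is the $(1 + |\theta| N)$ loss in Abel summation against the oscillation $e^{2\pi i \theta n}$, but this is precisely what is matched by the constraints $|\theta| \leq Q/N$ and $Q \leq \exp(c\sqrt{\log N})$.
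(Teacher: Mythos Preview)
Your approach is correct and essentially identical to the paper's proof. Both split into residue classes modulo $q$, discard the classes with $(r,q)>1$ trivially, apply partial (Abel) summation to the inner sum together with Page's theorem (with the secondary $\chi_q$-term when an exceptional character exists), and recover the Gauss sums $G(\mathds{1}_q,a)$ and $G(\chi_q,a)$ upon summing over $r$. The only cosmetic differences are that the paper works with $\vartheta(x;q,r)$ throughout while you pass to $\psi$ via von Mangoldt (at an acceptable $\calO(\sqrt{N}\log N)$ cost), and the paper splits the integral in the partial summation at $\sqrt{N}$ whereas you phrase the same loss as the factor $(1+|\theta|N)$; the resulting bounds and the final absorption of the extra $Q$ into the exponential (by shrinking $c$) are the same.
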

\begin{proof}
	Observe that for a prime $p$, $p \mid q$ if and only if $(p \bmod q, q) > 1$. Hence,
	\begin{align*}
		\Big|
		\sum_{\atop{r = 1}{(r, q) > 1}}^q 
		\sum_{\atop{p \in \PP_N}{p \equiv r \bmod q}}
		e^{2\pi i \xi p}
		\log p
		\Big|
		\leq
		\sum_{\atop{p \in \PP}{p \mid q}} \log p
		\leq
		q.
	\end{align*}
	Let $\theta = \xi - a/q$. For $p \equiv r \pmod q$, we have
	\[
		\xi p \equiv \theta p + r a / q \pmod 1,
	\]
	thus
	\[
		\sum_{r \in A_q} \sum_{\atop{p \in \PP_N}{p \equiv r \bmod q}}
		e^{2 \pi i \xi p} \log p
		=
		\sum_{r \in A_q} 
		e^{2\pi i r a/q} \sum_{\atop{p \in \PP_N}{p \equiv r \bmod q}}
        e^{2\pi i \theta p} \log p.
	\]
	For $x \geq 2$, we set
	\[
		\vartheta(x; q, r) = \sum_{\atop{p \in \PP_x}{p \equiv r \bmod q}} \log p.
	\]
	Then, by the partial summation, we obtain
	\begin{align}
		\nonumber
		\sum_{\atop{p \in \PP_N}{p \equiv r \bmod q}}
        e^{2 \pi i \theta p} \log p
		&=
		\sum_{\atop{p \in \PP_N \setminus \PP_{\sqrt{N}}}{p \equiv r \bmod q}}
		e^{2\pi i \theta p} \log p
		+
		\calO\big(\sqrt{N}\big) \\
		\label{eq:10}
		&=
		\vartheta(N; q, r) e^{2\pi i \theta N} 
		- \vartheta(\sqrt{N}; q, r) e^{2 \pi i \theta \sqrt{N}}
		- 2 \pi i \theta \int_{\sqrt{N}}^N 
		\vartheta(t; q, r) 
		e^{2\pi i \theta t} {\: \rm d} t + \calO\big(\sqrt{N} \big).
	\end{align}
	Analogously, for any $\frac{1}{2} \leq \beta \leq 1$, we can write
	\begin{equation}
		\label{eq:11}
		\sum_{n = 1}^N \frac{n^\beta - (n-1)^\beta}{\beta} e^{2\pi i \theta n}
		= 
		\beta^{-1} N^\beta e^{2\pi i \theta N} 
		- \beta^{-1} \sqrt{N^\beta} e^{2\pi i \theta \sqrt{N}}
		- 2\pi i \theta \beta^{-1} \int_{\sqrt{N}}^N t^\beta e^{2\pi i \theta t} {\: \rm d}t
		+\calO\big(\sqrt{N}\big).
	\end{equation}
	By the Page's theorem, there is an absolute constant $c > 0$ such that for each $x \geq 2$,
	$1 \leq q \leq \exp\big(c\sqrt{\log x}\big)$, and $r \in A_q$,
	\[
		\bigg|
		\vartheta(x; q, r) - \frac{x}{\varphi(q)}
		\bigg|
		\lesssim
		x \exp\big(-c \sqrt{\log x} \big),
	\]
	if there is no exceptional character modulo $q$, and
	\[
		\bigg|
        \vartheta(x; q, r) 
		-
		\frac{x}{\varphi(q)}
		+
		\frac{\chi(r)}{\varphi(q)} \beta^{-1} x^\beta
        \bigg|
        \lesssim
        x \exp\big(-c \sqrt{\log x} \big),
	\]
	when there is an exceptional character $\chi$ modulo $q$, and $\beta$ is the concomitant zero. Therefore, by
	\eqref{eq:10} and \eqref{eq:11}, we obtain
	\begin{align*}
		&	
		\bigg|
		\sum_{\atop{p \in \PP_N}{p \equiv r \bmod q}}
        e^{2 \pi i \theta p} \log p
		-
		\frac{1}{\varphi(q)}
		\sum_{n = 1}^N
		e^{2\pi i \theta n}
		\bigg(1 - \chi(r) \frac{n^\beta - (n-1)^\beta}{\beta}\bigg)
		\bigg| \\
		&\qquad\qquad\lesssim
		\sqrt{N}
		+
		\bigg|
		\vartheta(N; q, r)
		-
        \frac{N}{\varphi(q)}
        +
        \frac{\chi(r)}{\varphi(q)} \beta^{-1} N^\beta
		\bigg|
		+
		\bigg|
		\vartheta(\sqrt{N}; q, r) 
		-
        \frac{\sqrt{N}}{\varphi(q)}
        +
        \frac{\chi(r)}{\varphi(q)} \beta^{-1} \sqrt{N^{\beta}}
		\bigg| \\
		&\qquad\qquad\phantom{\lesssim}
		+
		\abs{\theta}
		\int_{\sqrt{N}}^N 
		\bigg|\vartheta(t; q, r) - \frac{t}{\varphi(q)} + \frac{\chi(r)}{\varphi(q)} \beta^{-1} t^{\beta}\bigg|
		{\: \rm d}t 
		\\
		&\qquad\qquad\lesssim
		N \exp\big(-c\sqrt{\log N}\big) + Q N^{-1} \int_{\sqrt{N}}^N t \exp\big(-c\sqrt{\log t}\big) {\: \rm d}t,
	\end{align*}
	which is bounded by $N Q \exp\big(-c\sqrt{\log N}\big)$. Finally, by the prime number theorem
	\[
		\bigg| \frac{\vartheta(N) - N}{N} \bigg| \leq C \exp\big(-c\sqrt{\log N}\big),
	\]
	and the proposition follows.
\end{proof}
Next, we select $\eta : \RR \rightarrow \RR$, a smooth function such that $0 \leq \eta \leq 1$, and
\[
    \eta(\xi) =
    \begin{cases}
        1 & \text{if } \abs{\xi} \leq \tfrac{1}{4}, \\
        0 & \text{if } \abs{\xi} \geq \tfrac{1}{2}.
    \end{cases}
\]
We may assume that $\eta$ is a convolution of two smooth functions with supports contained in
$\big(-\tfrac{1}{2}, \tfrac{1}{2}\big)$. For $s \in \NN_0$, we set
\[
	\eta_s(\xi) = \eta\big(2^{4 s} \xi \big).
\]
We define a family of approximating multipliers, by the formula
\begin{equation}
	\label{eq:44}
	\nu_n^s(\xi) = \sum_{a/q \in \mathscr{R}_s}
	\widehat{L_{2^n}^{a, q}}(\xi - a/q)
	\eta_s\big(\xi - a/q\big)
\end{equation}
where
\begin{align*}
	\mathscr{R}_s 
	= \big\{
	a/q \in \QQ \cap (0, 1] : a \in A_q, \text{ and } 2^s \leq q < 2^{s+1}, q \text{ is square-free or } 
	4 \mid q \text{ and } q/4 \text{ is square-free}
	\big\},
\end{align*}
and $\scrR_0 = \{1\}$. We set $\nu_n = \sum_{s \geq 0} \nu_n^s$.
\begin{theorem}
	\label{thm:1}
	There are $C, c > 0$ such that for all $n \in \NN_0$ and $\xi \in \TT$,
	\[
		\big|
		\mathfrak{m}_{2^n}(\xi) - 
		\nu_n(\xi)
		\big|
		\leq
		C \exp\big(-c \sqrt{n}\big)
	\]
	where $\mathfrak{m}_N$ is defined by \eqref{eq:29}.
\end{theorem}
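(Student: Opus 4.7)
The plan is a Hardy--Littlewood major/minor arc analysis. Fix $n \in \NN_0$ and set $Q = \exp(c_1\sqrt{n})$ for a small $c_1 > 0$, to be chosen below the constant appearing in Proposition \ref{prop:1}. A point $\xi \in \TT$ lies in the \emph{major arc} if there exist $a, q$ with $1 \leq q \leq Q$, $\gcd(a, q) = 1$, and $\abs{\xi - a/q} \leq Q/2^n$; otherwise it lies in the \emph{minor arc}. In the major arc, the pair $(a, q)$ is unique, since two such approximations would force $1/Q^2 \leq 2 Q / 2^n$, which fails for large $n$. Proposition \ref{prop:1} then yields
\[
	\mathfrak{m}_{2^n}(\xi) = \widehat{L_{2^n}^{a, q}}(\xi - a/q) + O\big(\exp(-c\sqrt{n})\big).
\]
Setting $s = \lfloor \log_2 q \rfloor$, the estimate $Q/2^n \ll 2^{-4s-2}$ gives $\eta_s(\xi - a/q) = 1$. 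If $q$ satisfies the square-freeness condition defining $\scrR_s$, the corresponding term in $\nu_n^s(\xi)$ equals $\widehat{L_{2^n}^{a,q}}(\xi - a/q)$; otherwise the Gauss sums vanish by the remark following \eqref{eq:15}, so $L_{2^n}^{a,q} = 0$ and both sides of the theorem reduce to $O(\exp(-c\sqrt{n}))$.

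It then suffices to bound the remaining contributions to $\nu_n(\xi)$. Combining \eqref{eq:15}, \eqref{eq:45}, \eqref{eq:14}, and the trivial bound $\lVert M_N^\beta \rVert_1 \lesssim 1$ yields
\[
	\big|\widehat{L_{2^n}^{a',q'}}(\theta)\big| \lesssim (q')^{-1/2+\epsilon} \min\big\{1, (2^n \abs{\theta})^{-1}\big\}.
\]
For $s' \geq 2$, distinct rationals in $\scrR_{s'}$ are separated by at least $1/(q' q'') > 2^{-2s'-2}$, which exceeds the diameter $2^{-4s'}$ of each $\eta_{s'}$-support; hence at most one rational at scale $s'$ is active at $\xi$. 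In the major arc, the uniqueness already established forces $\abs{\xi - a'/q'} > Q/2^n$ for every active $(a', q') \neq (a, q)$ with $q' \leq Q$, introducing an extra factor $Q^{-1}$ via the second bound above, while scales $s' > \log_2 Q$ are controlled by $(q')^{-1/2+\epsilon}$ alone. Summing the geometric series,
\[
	\sum_{s' \leq \log_2 Q} 2^{-s'(1/2-\epsilon)} Q^{-1} + \sum_{s' > \log_2 Q} 2^{-s'(1/2-\epsilon)} \lesssim Q^{-1/2+\epsilon} \lesssim \exp(-c\sqrt{n}).
\]

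In the minor arc, Dirichlet's theorem applied with parameter $2^n/Q$ produces $(a, q)$ with $Q < q \leq 2^n/Q$ and $\abs{\xi - a/q} \leq 1/q^2$; a standard Vaughan-type estimate for sums over primes then gives
\[
	\big|\mathfrak{m}_{2^n}(\xi)\big| \lesssim n^C \big(q^{-1/2} + 2^{-n/5} + 2^{-n/2} q^{1/2}\big) \lesssim n^C Q^{-1/2} \lesssim \exp(-c\sqrt{n}).
\]
The bound on $\abs{\nu_n(\xi)}$ proceeds exactly as in the previous paragraph, since the minor arc condition directly ensures $\abs{\xi - a'/q'} > Q/2^n$ for every active $(a', q')$ with $q' \leq Q$. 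The main technical obstacle is the uniqueness argument in the major arc, which rests on the strict comparison $Q^3 \ll 2^n$, together with carefully coordinating the $\eta_s$-support width $2^{-4s}$ with the Dirichlet spacing $2^{-2s-2}$ between rationals at a given scale.
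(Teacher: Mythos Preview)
Your proof is correct and follows essentially the same major/minor arc strategy as the paper: Proposition~\ref{prop:1} on the major arc, Vinogradov's bound on the minor arc, and the Gauss-sum decay \eqref{eq:15} together with \eqref{eq:14} and the disjointness of the $\eta_s$-supports to dispose of the remaining pieces of $\nu_n$. The only cosmetic difference is the choice of scale cutoff (you split at $s' \approx \log_2 Q$, whereas the paper uses $s_1 \approx \log_2(2^n Q_n^{-2})$ on the major arc and $s_2 \approx \tfrac{1}{2}\log_2 Q_n$ on the minor arc), but both choices lead to the same $\exp(-c\sqrt{n})$ bound.
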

\begin{proof}
	Let
	\[
		Q_n = \exp\big(\tfrac{c}{2} \sqrt{n} \big)
	\]
	where the constant $c$ is determined in Proposition \ref{prop:1}. By the Dirichlet's principle, there are coprime
	integers $a$ and $q$, satisfying $1 \leq a \leq q \leq 2^n Q_n^{-1}$, and such that
	\[
		\bigg| \xi - \frac{a}{q} \bigg| \leq \frac{1}{q} 2^{-n} Q_n.
	\]
	Let us first consider the case when $1 \leq q \leq Q_n$. We select $s_1 \in \NN_0$ satisfying
	\[
        2^{s_1+1} < \frac{1}{2} 2^n Q_n^{-2} \leq 2^{s_1+2}.
    \]
	For $s \leq s_1$ and $a'/q' \in \mathscr{R}_s$, with $a'/q' \neq a/q$, we have
	\[
		\bigg| 	\xi - \frac{a'}{q'} \bigg|
		\geq 
		\frac{1}{q q'}
		-
		\bigg|\xi - \frac{a}{q} \bigg|
		\geq
		Q_n^{-1} 2^{-s_1-1}
		- 2^{-n} Q_n \geq 2^{-n} Q_n.
	\]
	Therefore, by \eqref{eq:15} and \eqref{eq:14},
	\[
		\bigg|
		\widehat{L_{2^n}^{a', q'}} (\xi - a'/q') \eta_s(\xi - a'/q')
		\bigg|
		\lesssim
		2^{-\frac{s}{4}}
		\big|2^n (\xi - a'/q') \big|^{-1} \leq 2^{-\frac{s}{4}} Q_n^{-1},
	\]
	which implies that
	\begin{align*}
		\bigg|
		\sum_{s = 0}^{s_1} 
		\sum_{\atop{a'/q' \in \scrR_s}{a'/q' \neq a/q}} 
		\widehat{L^{a', q'}_{2^n}}(\xi - a'/q') 
		\eta_s(\xi - a'/q')
		\bigg|
		\lesssim
		Q_n^{-1} \sum_{s \geq 0} 2^{-\frac{s}{4}}.
	\end{align*}
	For $s > s_1$, by \eqref{eq:15} we obtain
	\[
		\bigg|
		\sum_{s > s_1}
		\sum_{\atop{a'/q' \in \scrR_s}{a'/q' \neq a/q}}
		\widehat{L^{a', q'}_{2^n}}(\xi - a'/q') \eta_s(\xi-a'/q')
		\bigg|
		\lesssim
		\sum_{s > s_1}
		2^{-\frac{s}{4}}
		\lesssim 
		\big(2^n Q_n^{-2}\big)^{-\frac{1}{4}} \lesssim Q_n^{-1}.
	\]
	If $q$ is square-free or $4 \mid q$ and $q/4$ is square-free then there is $s_0 \in \NN_0$ such that
	$a/q \in \mathscr{R}_{s_0}$, thus
	\[
		Q_n \geq 2^{s_0}.
	\]
	By Proposition \ref{prop:1}, 
	\[
		\bigg|
		\mathfrak{m}_{2^n}(\xi) - \widehat{L^{a, q}_{2^n}} (\xi - a/q) \eta_{s_0}(\xi - a/q)
		\bigg|
		\lesssim
		\big|2^n (\xi - a/q)\big|^{-1} \big(1 - \eta_{s_0}(\xi - a/q)\big) 
		+ Q_n^{-1}.
	\]
	Since $1 - \eta_{s_0}(\xi - a/q) > 0$, whenever
	\[
		\bigg|
		\xi - \frac{a}{q}
		\bigg|
		\geq \frac{1}{4} 2^{-4 s_0} \gtrsim Q_n^{-4},
	\]
	we obtain
	\[
		\bigg|
		\mathfrak{m}_{2^n}(\xi)
		-
		\widehat{L^{a, q}_{2^n}}(\xi - a/q) \eta_{s_0}(\xi - a/q)
        \bigg|
		\lesssim
		2^{-n} Q_n^4 + Q_n^{-1}
		\lesssim Q_n^{-1}.
	\]
	Finally, if $q$ and $q/4$ are not square-free then by Proposition \ref{prop:1},
	\[
		\bigg|
        \mathfrak{m}_{2^n}(\xi)
        -
        \widehat{L^{a, q}_{2^n}}(\xi - a/q) \eta_{s_0}(\xi - a/q)
        \bigg|
		=
        \big|
        \mathfrak{m}_{2^n}(\xi)
        \big| \lesssim Q_n^{-1}.
    \]
	It remains to deal with $Q_n \leq q \leq 2^n Q_n^{-1}$. By the Vinogradov's inequality
	(see \cite[Theorem 1, Chapter IX]{vin} or \cite[Theorem 8.5]{nat}), we get
	\[
		\big|
		\mathfrak{m}_{2^n}(\xi) 
		\big| \lesssim 
		n^4 \Big(q^{-\frac{1}{2}} + 2^{-\frac{1}{2}n} q^{\frac{1}{2}} + 2^{-\frac{1}{5}n}\Big)
		\lesssim	
		n^4 Q_n^{-\frac{1}{2}}.
	\]
	Next, we show that
	\[
		\Big|
        \sum_{s \geq 0} \sum_{a'/q' \in \scrR_s} \widehat{L^{a', q'}_{2^n}}(\xi - a'/q') \eta_s(\xi - a'/q')
        \Big|
        \lesssim
        Q_n^{-\frac{1}{8}}.
	\]
	Select $s_2 \in \NN_0$ such that
	\begin{equation}
		\label{eq:38}
		2^{s_2+1} \leq Q_n^\frac{1}{2} \leq 2^{s_2+2}. 
	\end{equation}
	For $s \leq s_2$, if $a'/q' \in \mathscr{R}_s$, then $1 \leq q' \leq Q_n^\frac{1}{2}$, and hence
	\[
		\bigg|\xi - \frac{a'}{q'}\bigg| \geq \frac{1}{q'} 2^{-n} Q_n \geq 2^{-n} Q_n^{\frac{1}{2}}.
	\]
	Therefore, by \eqref{eq:15} and \eqref{eq:14},
	\[
		\bigg|
        \widehat{L^{a', q'}_{2^n}}(\xi - a'/q') \eta_s(\xi - a'/q')
        \bigg|
		\lesssim
		2^{-\frac{s}{4}} Q_n^{-\frac{1}{2}},
	\]
	which entails that
	\[
		\Big|
		\sum_{s=0}^{s_2} \sum_{a'/q' \in \scrR_s} \widehat{L^{a', q'}_{2^n}}(\xi - a'/q') \eta_s(\xi - a'/q')
		\Big|
		\lesssim
		Q_n^{-\frac{1}{2}} \sum_{s \geq 0} 2^{-\frac{s}{4}}.
	\]
	If $s > s_2$, then by \eqref{eq:15}, we get
	\[
		\bigg|
        \widehat{L^{a', q'}_{2^n}}(\xi - a'/q') \eta_s(\xi - a'/q')
        \bigg|
		\lesssim
		2^{-\frac{s}{4}},
	\]
	hence by \eqref{eq:38},
	\[
		\Big|
        \sum_{s > s_2} \sum_{a'/q' \in \scrR_s} \widehat{L^{a', q'}_{2^n}}(\xi - a'/q') \eta_s(\xi - a'/q')
        \Big|
        \lesssim
		\sum_{s > s_2} 2^{-\frac{s}{4}} \lesssim Q_n^{-\frac{1}{8}},
	\]
	and the theorem follows.
\end{proof}

\section{Equidistribution of weak $\ell^1$ norms}
\label{sec:5}
In this section we prove that the maximal function associated with kernels $(M^\beta_{2^n} : n \in \NN_0)$
has weak $\ell^1(\ZZ)$-norm equidistributed in residue classes. Before embarking on the proof, let us recall two lemmas
essential for the argument.
\begin{lemma}{\cite[Lemma 1]{mt2}}
	\label{lem:1}
	There is $C > 0$ such that for all $s \in \NN$ and $u \in \RR$, 
	\begin{align*}
		\bigg\|\int_{-\frac{1}{2}}^{\frac{1}{2}} 
		e^{2 \pi i \xi x} \eta_s(\xi) {\: \rm d}\xi \bigg\|_{\ell^1(x)} &\leq C,\\
		\bigg\|\int_{-\frac{1}{2}}^{\frac{1}{2}} 
		e^{2 \pi i \xi x} \big(1 - e^{2 \pi i \xi u} \big) \eta_s(\xi) 
		{\: \rm d}\xi \bigg\|_{\ell^1(x)} &\leq C \abs{u} 2^{-4 s}.
	\end{align*}
\end{lemma}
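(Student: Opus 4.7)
The plan is to reduce both estimates to standard scaling properties of the inverse Fourier transform of the fixed smooth bump $\eta$. Set
\[
K_s(x) := \int_{-1/2}^{1/2} e^{2\pi i \xi x}\eta_s(\xi)\, d\xi.
\]
Since $\eta_s(\xi) = \eta(2^{4s}\xi)$ is supported in a neighborhood of $0$ of size $\lesssim 2^{-4s} \leq 1$, the truncated integral over $[-\tfrac{1}{2}, \tfrac{1}{2}]$ coincides with the inverse Fourier transform of $\eta_s$ on the whole line. A change of variable gives
\[
K_s(x) = 2^{-4s}\, \check{\eta}\bigl(2^{-4s} x\bigr),
\]
where $\check{\eta} := \calF^{-1} \eta$ is a Schwartz function because $\eta$ is smooth with compact support.

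For the first estimate, I would invoke the decay $|\check{\eta}(y)| \lesssim (1+|y|)^{-2}$, giving $|K_s(x)| \lesssim 2^{-4s}(1+2^{-4s}|x|)^{-2}$. Comparing the sum over $x \in \ZZ$ to the integral $\int_{\RR}(1+2^{-4s}|y|)^{-2}\, dy = 2^{4s}\int_{\RR}(1+|y|)^{-2}\, dy$ yields
\[
\sum_{x \in \ZZ} |K_s(x)| \lesssim 2^{-4s} \cdot 2^{4s} = 1.
\]

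For the second estimate, I would write $1 - e^{2\pi i \xi u} = -\int_0^u 2\pi i \xi\, e^{2\pi i \xi t}\, dt$ and interchange the two integrals (justified by absolute convergence, since $\eta_s$ has compact support) to obtain
\[
\int_{-1/2}^{1/2} e^{2\pi i \xi x}(1 - e^{2\pi i \xi u}) \eta_s(\xi)\, d\xi = -\int_0^u K_s'(x+t)\, dt,
\]
where $K_s'$ denotes the derivative of $K_s$ viewed as a smooth function on $\RR$. Direct differentiation and rescaling give $K_s'(x) = 2^{-8s}\check{\eta}'(2^{-4s} x)$, and since $\check{\eta}'$ is also Schwartz, the same sum-versus-integral comparison produces
\[
\sum_{x \in \ZZ} |K_s'(x+t)| \lesssim 2^{-8s} \cdot 2^{4s} = 2^{-4s},
\]
uniformly in $t \in \RR$. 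Integrating in $t$ over $[0, |u|]$ then gives the required $C|u|2^{-4s}$ bound.

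The argument is essentially routine and uses only the smoothness and compact support of $\eta$; no genuine obstacle is anticipated. The only point to watch is that the scaling factor $2^{-4s}$ outside the sum is absorbed by the $2^{4s}$ gain coming from comparing the sum on $\ZZ$ with the Lebesgue integral of the dilated Schwartz tail, while the extra $\xi$-factor in the second identity supplies the additional $2^{-4s}$ that drives the $|u|$-Lipschitz control.
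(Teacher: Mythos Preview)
The paper does not supply its own proof of this lemma; it is quoted verbatim from \cite[Lemma 1]{mt2} and used as a black box. Your argument is correct and is essentially the standard proof: identify $K_s$ with the rescaled Schwartz function $2^{-4s}\check{\eta}(2^{-4s}\,\cdot\,)$, use decay to control the $\ell^1$ sum uniformly in $s$, and for the second bound bring down a factor of $\xi$ via $1-e^{2\pi i\xi u}=-\int_0^u 2\pi i\xi\,e^{2\pi i\xi t}\,dt$, which converts the extra smallness of $\xi$ on the support of $\eta_s$ into the gain $2^{-4s}$. The only cosmetic point worth noting is that for $u<0$ the integral $\int_0^u$ should be read as $-\int_u^0$, which you implicitly handle by taking absolute values and integrating over $[0,|u|]$.
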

\begin{lemma}{\cite[Lemma 2]{mt2}}
	\label{lem:2}
	For all $p \geq 1$, any $1 \leq Q \leq 2^{2 s}$ with $s \in \NN$, 
	$r \in \{1, \ldots, Q\}$, and any finitely supported function $f: \ZZ \rightarrow \CC$, 
	\[
		\big\|
		\calF^{-1}\big(\eta_s \hat{f} \big)(Q x + r)
		\big\|_{\ell^p(x)}
		\simeq
		Q^{-\frac{1}{p}} \big\|
        \calF^{-1}\big(\eta_s \hat{f} \big)
        \big\|_{\ell^p}.
	\]
\end{lemma}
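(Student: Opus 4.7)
The plan is to slice $g := \calF^{-1}(\eta_s \hat f)$ according to residues modulo $Q$: set $g_{r'}(x) := g(Qx+r')$ for $r' \in \{1, \dots, Q\}$, so that the partition identity
\[
\|g\|_{\ell^p(\ZZ)}^p = \sum_{r'=1}^Q \|g_{r'}\|_{\ell^p(x)}^p
\]
reduces the lemma to showing that each slice norm is comparable to $Q^{-1/p}\|g\|_{\ell^p}$. The key auxiliary object is a ``fattened'' multiplier: pick a smooth bump $\tilde\eta$ with $\tilde\eta \equiv 1$ on $\supp \eta$ and $\supp \tilde\eta \subset (-1,1)$, and set $\tilde\eta_s(\xi) := \tilde\eta(2^{4s+1}\xi)$, so that $\tilde\eta_s \equiv 1$ on $\supp \eta_s$ while still being supported in $|\xi| \leq 2^{-4s}$. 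The kernel $\tilde K_s := \calF^{-1}(\tilde\eta_s)$ is a Schwartz-like function at scale $2^{4s}$; sampling this decay at spacing $Q \leq 2^{2s} \leq 2^{4s}$ yields
\[
\sum_{x \in \ZZ} \big|\tilde K_s(Qx+u)\big| \lesssim \frac{1}{Q}, \qquad u \in \ZZ.
\]

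For the upper bound, since $\eta_s \tilde\eta_s = \eta_s$ one has $g = \tilde K_s * g$; partitioning the convolution into cosets modulo $Q$ gives
\[
g_r(x) = \sum_{r'=1}^Q \big(\Phi_{r-r'} * g_{r'}\big)(x), \qquad \Phi_u(x) := \tilde K_s(Qx+u),
\]
and Young's inequality combined with the sampling bound and Hölder's inequality in the variable $r'$ yields $\|g_r\|_{\ell^p(x)} \lesssim Q^{-1/p}\|g\|_{\ell^p}$. For the lower bound I would compare slices. Writing $u := r'-r$ and $h_u(y) := g(y+u)-g(y)$, one has $g_{r'}(x) - g_r(x) = h_u(Qx+r)$, and $h_u = \calF^{-1}\big(\eta_s \cdot (e^{2\pi i u \cdot}-1)\hat f\big)$ is of exactly the same form as $g$, so the upper bound applies to $h_u$. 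Writing $h_u = \tilde K_s^{(u)} * g$ with $\tilde K_s^{(u)}(z) := \tilde K_s(z+u)-\tilde K_s(z)$, the analog of the second estimate in Lemma \ref{lem:1} for $\tilde\eta_s$ gives $\|\tilde K_s^{(u)}\|_{\ell^1} \lesssim |u|2^{-4s}$, and hence
\[
\|g_{r'} - g_r\|_{\ell^p(x)} \lesssim Q^{-1/p} \cdot |u| 2^{-4s} \|g\|_{\ell^p} \lesssim Q^{-1/p} \cdot 2^{-2s} \|g\|_{\ell^p}
\]
using $|u| \leq Q \leq 2^{2s}$. The partition identity forces $\max_{r'}\|g_{r'}\|_{\ell^p} \geq Q^{-1/p}\|g\|_{\ell^p}$, so picking $r'$ to be the maximizer and applying the triangle inequality yields $\|g_r\|_{\ell^p} \geq \big(1-C 2^{-2s}\big) Q^{-1/p}\|g\|_{\ell^p}$. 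This produces the lower bound as soon as $s$ exceeds an absolute threshold $s_0$; for $s \leq s_0$, $Q \leq 2^{2s_0}$ is bounded and comparability is trivial with absorbed constants.

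The main obstacle, in my view, is the sampling estimate $\sum_x|\tilde K_s(Qx+u)| \lesssim 1/Q$: Lemma \ref{lem:1} supplies only the total $\ell^1$ mass of the relevant kernels, so one must separately exploit quantitative Schwartz-type decay of $\tilde K_s$ at scale $2^{4s}$, which comes from the smoothness of the bump $\tilde\eta$ and is precisely what makes the condition $Q \leq 2^{2s}$ (indeed $Q \leq 2^{4s}$ would do) kick in. Once this pointwise decay is in hand, the remaining ingredients (Young's and Hölder's inequalities, the partition identity, and the triangle inequality) are mechanical.
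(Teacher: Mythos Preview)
The paper does not prove this lemma; it is quoted without proof from \cite[Lemma~2]{mt2}, so there is no argument here to compare yours against. Your sketch is the natural one and is essentially correct: partition $\ZZ$ into residue classes modulo $Q$, write $g=\tilde K_s*g$ via a fattened cutoff $\tilde\eta_s$, derive the upper bound from the sampling estimate $\sum_x|\tilde K_s(Qx+u)|\lesssim Q^{-1}$ together with Young's and H\"older's inequalities, and derive the lower bound by showing that all the slices $g_{r'}$ are close to one another (using the second estimate of Lemma~\ref{lem:1} for $\tilde\eta$) so that each must carry roughly its share of the $\ell^p$ mass. Your identification of the sampling estimate as the one place where the hypothesis $Q\le 2^{2s}$ enters is also correct (and in fact $Q\le 2^{4s}$ would suffice there).

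One quibble: the disposal of small $s$ as ``trivial with absorbed constants'' is not justified as written. Boundedness of $Q$ alone does not force $\|g_r\|_{\ell^p}\gtrsim\|g\|_{\ell^p}$ --- a sequence supported on a single residue class is a counterexample --- so the Fourier localization of $g$ must still be used. The clean fix is to observe that $\widehat g$ is supported in $[-2^{-4s-1},2^{-4s-1}]$, which for $s\ge 1$ and $Q\le 2^{2s}$ lies inside $[-\tfrac{1}{8Q},\tfrac{1}{8Q}]$; the Plancherel--P\'olya sampling inequality for band-limited functions then gives $\|g(Q\cdot+r)\|_{\ell^p}\simeq Q^{-1/p}\|g\|_{\ell^p}$ with absolute constants for every $s\ge 1$, which in fact yields the whole lemma in one stroke and makes the split into large and small $s$ unnecessary. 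This is a cosmetic wrinkle rather than a structural gap.
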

The following theorem is the main result of this section.
\begin{theorem}
	\label{thm:6}
	There is $C > 0$ such that for any $1 \leq Q \leq 2^{2 s}$ with $s \in \NN$, $r \in \{1, \ldots, Q\}$,
	$\frac{1}{2} \leq \beta \leq 1$, and any finitely supported function $f: \ZZ \rightarrow \CC$, 
	\[
		\sup_{\lambda > 0}{
		\lambda \cdot 
		\Big| \Big\{
		x \in \ZZ :
		\sup_{n \in \NN_0} \big|
		M_{2^n}^\beta *\calF^{-1}\big(\eta_s \hat{f}\big) (Q x + r)
		\big|
		> \lambda
		\Big\}\Big|}
		\leq
		C 
		\big\|\calF^{-1} \big(\eta_s \hat{f}\big)(Qx+r)\big\|_{\ell^1(x)}.
	\]
\end{theorem}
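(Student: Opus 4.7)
The plan is to reduce to the classical weak--$(1,1)$ bound for the Hardy--Littlewood maximal function $M_{HL}$ on $\ZZ$, applied to the sampled function $G(x) := \calF^{-1}(\eta_s \hat f)(Qx+r)$. First, Abel summation on the positive decreasing coefficients $a_j^\beta := (j^\beta - (j-1)^\beta)/\beta$ (with $\sum_{j=1}^N a_j^\beta = N^\beta/\beta \leq 2N$) gives the pointwise domination
\[
\big|M_N^\beta * h(y)\big| \lesssim M_{HL}|h|(y)
\]
uniformly in $N \geq 1$ and $\beta \in [\tfrac12,1]$, which reduces the problem to a weak--$(1,1)$ bound for $M_{HL}|g|$ restricted to the progression $Q\ZZ+r$, where $g := \calF^{-1}(\eta_s\hat f)$.

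Setting $G_i(x) := g(Qx+r+i)$ for $0 \leq i < Q$ (so $G_0 = G$) and grouping the HL sum according to $j = Qk+i$, one obtains the pointwise estimate
\[
M_{HL}|g|(Qx+r) \lesssim \frac{1}{Q}\sum_{i=0}^{Q-1} M_{HL}|G_i|(x) + \max_{0\leq i < Q}|G_i(x)|,
\]
so the task reduces to comparing $M_{HL}|G_i|$ with $M_{HL}|G|$ up to a controlled perturbation.

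The heart of the argument is a Fourier smoothness transfer. Since $\hat g$ is supported in $\{|\xi|\leq 2^{-4s-1}\}$, one selects a slightly enlarged cutoff $\eta_s'$ that equals $1$ on this set, so that $g = g * K_s'$ for $K_s' := \calF^{-1}(\eta_s')$. By the same reasoning as in Lemma \ref{lem:1}, $\|K_s'(\cdot+i) - K_s'\|_{\ell^1} \lesssim i\cdot 2^{-4s}$, whence $\|g(\cdot+i)-g\|_{\ell^1} \lesssim i\cdot 2^{-4s}\|g\|_{\ell^1}$. Because $g(\cdot+i)-g$ is of the form $\calF^{-1}(\eta_s \hat h)$ with $h(y)=f(y+i)-f(y)$, Lemma \ref{lem:2} applies and yields
\[
\|G_i - G\|_{\ell^1(x)} \simeq Q^{-1}\|g(\cdot+i)-g\|_{\ell^1} \lesssim i\cdot 2^{-4s}\|G\|_{\ell^1(x)}.
\]

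Putting everything together, $M_{HL}|G_i| \leq M_{HL}|G| + M_{HL}|G_i-G|$, and the classical weak--$(1,1)$ bound for $M_{HL}$ on $\ZZ$ handles $M_{HL}|G|$, giving $\lesssim \lambda^{-1}\|G\|_{\ell^1}$. For the perturbation, a crude union bound
\[
\Big|\Big\{x : \max_i M_{HL}|G_i-G|(x) > \lambda\Big\}\Big| \leq \sum_{i=0}^{Q-1}\frac{C\|G_i-G\|_{\ell^1}}{\lambda} \lesssim \frac{Q^2\cdot 2^{-4s}}{\lambda}\|G\|_{\ell^1} \lesssim \frac{\|G\|_{\ell^1}}{\lambda}
\]
suffices, using precisely $Q \leq 2^{2s}$; the contribution of $\max_i|G_i|$ is handled analogously. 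The main obstacle is the smoothness transfer: one must quantitatively exploit the bandlimitedness of $g$ by combining Lemma \ref{lem:1} with Lemma \ref{lem:2}, and the sharpness of the threshold $Q \leq 2^{2s}$ is exactly what makes the error $Q^2\cdot 2^{-4s}$ harmless.
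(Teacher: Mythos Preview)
Your proof is correct, but it takes a different and somewhat more laborious route than the paper's.

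The paper avoids your kernel decomposition in Step~3 altogether. Instead, it sets
\[
J_r(\lambda) = \Big|\Big\{x \in \ZZ : \sup_{n}\big|M_{2^n}^\beta * \calF^{-1}(\eta_s\hat f)(Qx+r)\big| > \lambda\Big\}\Big|
\]
and observes the elementary identity $\sum_{r=1}^Q J_r(\lambda) = \big|\{\sup_n|M_{2^n}^\beta * \calF^{-1}(\eta_s\hat f)| > \lambda\}\big|$, which is $\lesssim \lambda^{-1}\|\calF^{-1}(\eta_s\hat f)\|_{\ell^1}$ by the global weak-$(1,1)$ bound. The smoothness transfer is then applied not to compare $G_i$ with $G$, but to compare $J_r$ with $J_{r'}$ directly: one writes the difference of the two operators as a multiplier with the factor $(1-e^{2\pi i\xi(r'-r)})\eta_{s-1}(\xi)$, and Lemma~\ref{lem:1} plus the global weak-$(1,1)$ gives $J_r(\lambda) \leq J_{r'}(\lambda/2) + C\lambda^{-1}Q\,2^{-4s}\|\calF^{-1}(\eta_s\hat f)\|_{\ell^1}$. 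Summing over $r'$ yields $QJ_r(\lambda) \lesssim \lambda^{-1}(1+Q^22^{-4s})\|\calF^{-1}(\eta_s\hat f)\|_{\ell^1}$, and Lemma~\ref{lem:2} converts the right side to the sampled norm.

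Your approach instead decomposes $M_{HL}|g|(Qx+r)$ pointwise and compares the sampled functions $G_i$ to $G$. This works, but it generates an extra boundary term $\max_i|G_i(x)|$ (which in the one-sided setting should also involve a shift such as $G_i(x-1)$) that has to be split again as $|G| + \max_i|G_i-G|$ and handled separately. The paper's route sidesteps this by staying at the level of the distribution functions $J_r$ throughout. Both arguments rest on the same smoothness estimate from Lemma~\ref{lem:1} and the same threshold $Q \leq 2^{2s}$, producing the identical error $Q^2\cdot 2^{-4s}$; the paper's version is simply shorter and avoids the need to analyze the structure of the Hardy--Littlewood kernel modulo $Q$.
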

\begin{proof}
	Observe that, by the mean value theorem, for $x \in \NN$,
	\[
		\frac{x^\beta - (x-1)^\beta}{\beta} \leq x^{\beta-1} \leq 1,
	\]
	thus
	\[
		M^\beta_N(x) \leq M_N(x).
	\]
	In particular, by the Hardy--Littlewood maximal theorem, there is $C > 0$ such that for all
	$\frac{1}{2} \leq \beta \leq 1$, and any $f \in \ell^1(\ZZ)$,
	\begin{equation}
		\label{eq:16}
		\sup_{\lambda > 0} {
		\lambda \cdot \Big|
		\Big\{x \in \ZZ : 
		\sup_{n \in \NN_0}
		\big| M_{2^n}^\beta * f (x) \big| > \lambda
		\Big\}
		\Big|}
		\leq
		C
		\|f\|_{\ell^1}. 
	\end{equation}
	For $r \in \{1, \ldots, Q\}$ and $\lambda > 0$, we set
	\[
		J_r(\lambda) = \Big|\Big\{
        x \in \ZZ :
        \sup_{n \in \NN_0}
		\big|
        M_{2^n}^\beta * \calF^{-1}\big( \eta_{s} \hat{f} \big)(Q x + r)\big| > \lambda
        \Big\}\Big|.
	\]
	Then, by \eqref{eq:16}, we have
	\begin{align}
		\nonumber
		J_1(\lambda) + \ldots + J_Q(\lambda) 
		&= \Big|\Big\{ x \in \ZZ :
        \sup_{n \in \NN_0}  \big|
		M_{2^n}^\beta * \calF^{-1}\big( \eta_{s} \hat{f} \big) (x) \big| > \lambda
        \Big\}\Big| \\
		\label{eq:33}
		&\leq
		C \lambda^{-1} \big\|\calF^{-1}\big(\eta_{s} \hat{f}\big)\big\|_{\ell^1}.
	\end{align}
	Moreover, for any $r, r' \in \{1, \ldots, Q\}$, we have
	\begin{align*}
		&\bigg|\bigg\{x \in \ZZ  :
		\sup_{n \in \NN_0}
		\bigg|
		\int_0^1 e^{2\pi i \xi (Qx +r)} \Big(1 - e^{2\pi i \xi (r' - r)} \Big)
		\widehat{M_{2^n}^\beta}(\xi) \eta_{s}(\xi) \hat{f}(\xi)
		{\: \rm d}\xi
		\bigg|
		> \tfrac{1}{2} \lambda
		\bigg\}\bigg| \\
		&\qquad\qquad\leq
		C \lambda^{-1}
		\bigg\|
		\int_0^1 e^{2\pi i \xi x}
		\Big( 1 - e^{2\pi i \xi (r' - r)} \Big) \eta_{s}(\xi) \hat{f}(\xi) {\: \rm d}\xi
		\bigg\|_{\ell^1(x)}.
	\end{align*}
	Since $\eta_{s} = \eta_{s} \eta_{s-1}$, by Young's convolution inequality and Lemma \ref{lem:1}, we
	obtain
	\begin{align*}
		&\bigg\|
        \int_0^1 e^{2\pi i \xi x}
        \Big(1 - e^{2\pi i \xi (r' - r)} \Big) \eta_{s}(\xi) \hat{f}(\xi) {\: \rm d}\xi
        \bigg\|_{\ell^1(x)}\\
		&\qquad\qquad\leq
		\bigg\|
        \int_0^1 e^{2\pi i \xi x}
        \Big( 1 - e^{2\pi i \xi (r' - r)} \Big) \eta_{s-1}(\xi)
        \bigg\|_{\ell^1(x)}
		\big\|\calF^{-1} \big(\eta_{s} \hat{f} \big) \big\|_{\ell^1} \\
		&\qquad\qquad\leq
		C Q 2^{-4s} \big\|\calF^{-1}\big(\eta_{s} \hat{f} \big)\big\|_{\ell^1}.
	\end{align*}
	Thus
	\[
		J_r(\lambda) \leq J_{r'}(\lambda/2) + C \lambda^{-1} Q 2^{-4 s} 
		\big\|\calF^{-1} \big( \eta_{s} \hat{f} \big)\big\|_{\ell^1},
	\]
	which together with \eqref{eq:33} imply that
	\begin{align*}
		Q J_r(\lambda) &\leq J_1(\lambda/2) + \ldots + J_Q(\lambda/2) + C \lambda^{-1} Q^2 2^{-4 s} 
		\big\|\calF^{-1} \big( \eta_{s} \hat{f} \big)\big\|_{\ell^1} \\
		&\lesssim \lambda^{-1} \Big(1+Q^2 2^{-4 s}\Big) \big\|\calF^{-1} \big( \eta_{s} \hat{f} \big)\big\|_{\ell^1} \\
		&\lesssim \lambda^{-1} \big\|\calF^{-1} \big( \eta_{s} \hat{f} \big)\big\|_{\ell^1},
	\end{align*}
	where the last inequality is a consequence of $1 \leq Q \leq 2^{2s}$. Therefore, in view of Lemma \ref{lem:2},
	we immediately get
	\[
		Q J_r(\lambda) 
		\lesssim
		\lambda^{-1} 
		\big\|\calF^{-1}\big(\eta_{s} \hat{f} \big)(Qx+r)\big\|_{\ell^1(x)},
	\]
	which is the desired conclusion.
\end{proof}
Essentially the same reasoning as in the proof of Theorem \ref{thm:6} leads to the following theorem.
\begin{theorem}
	\label{thm:9}
	There is $C  > 0$ such that for all $1 \leq Q \leq 2^{2s}$ with $s \in \NN$, $r \in \{1, \ldots, Q\}$,
	$\frac{1}{2} \leq \beta \leq 1$, and any finitely supported function $f: \ZZ \rightarrow \CC$,
	\[
		\Big\|
		\sup_{n \in \NN_0}
		\Big|
		\calF^{-1}\big(\widehat{M_{2^n}^\beta} \eta_s \hat{f} \big)(Qx+r)
		\Big|
		\Big\|_{\ell^2(x)}
		\leq
		C \big\|\calF^{-1}\big(\eta_s \hat{f}\big)(Qx+r)f\big\|_{\ell^2(x)}.
	\]
\end{theorem}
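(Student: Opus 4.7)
The plan is to follow the strategy of Theorem \ref{thm:6} line by line, replacing weak $\ell^1$ quantities by $\ell^2$ norms, Young's inequality by Plancherel's identity, and the weak-type Hardy--Littlewood maximal inequality by its strong-type $\ell^2$ counterpart.

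First I would use the pointwise bound $M^\beta_{2^n}(x) \leq M_{2^n}(x)$ (already observed in the proof of Theorem \ref{thm:6} via the mean value theorem) together with the $\ell^2$-boundedness of the dyadic Hardy--Littlewood maximal operator to record the global estimate
\[
	\Big\|\sup_{n \in \NN_0} \big|M^\beta_{2^n} * h\big|\Big\|_{\ell^2(\ZZ)} \leq C \|h\|_{\ell^2(\ZZ)}
\]
uniformly in $\beta \in [\tfrac12, 1]$. Setting $g_s = \calF^{-1}(\eta_s \hat{f})$ and
\[
	I_r = \Big\|\sup_{n \in \NN_0} \big|M^\beta_{2^n} * g_s\big|(Qx + r)\Big\|_{\ell^2(x)},
\]
the partition of $\ZZ$ into residue classes modulo $Q$ immediately yields $\sum_{r=1}^{Q} I_r^2 \leq C^2 \|g_s\|_{\ell^2}^2$.

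The next step equidistributes $I_r$ across residue classes. For $r, r' \in \{1, \dots, Q\}$, a short Fourier computation gives
\[
	M^\beta_{2^n} * g_s(Qx + r') - M^\beta_{2^n} * g_s(Qx + r) = M^\beta_{2^n} * \tilde g(Qx + r),
\]
with $\tilde g = \calF^{-1}\bigl((e^{2\pi i \xi (r'-r)} - 1) \eta_s \hat{f}\bigr)$. Since $\supp \eta_s \subset [-2^{-4s-1}, 2^{-4s-1}]$ and $|e^{2\pi i \xi(r'-r)} - 1| \leq 2\pi |\xi| |r'-r|$, Plancherel's identity yields $\|\tilde g\|_{\ell^2} \lesssim Q \cdot 2^{-4s} \|g_s\|_{\ell^2}$. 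Applying the global bound above to $\tilde g$ and dominating the $\ell^2(Q\ZZ + r)$ norm by the full $\ell^2(\ZZ)$ norm, I obtain
\[
	I_r \leq I_{r'} + C\, Q\, 2^{-4s} \|g_s\|_{\ell^2}.
\]

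Finally, squaring (via $(a+b)^2 \leq 2a^2 + 2b^2$), summing over $r'$, and using the preceding sum bound gives
\[
	Q\, I_r^2 \lesssim \|g_s\|_{\ell^2}^2 + Q^3\, 2^{-8s} \|g_s\|_{\ell^2}^2 \lesssim \|g_s\|_{\ell^2}^2,
\]
where the hypothesis $Q \leq 2^{2s}$ ensures $Q^3 2^{-8s} \leq 2^{-2s} \leq 1$. Dividing by $Q$ and invoking Lemma \ref{lem:2} with $p=2$ to rewrite $Q^{-1/2} \|g_s\|_{\ell^2}$ as $\|g_s(Qx+r)\|_{\ell^2(x)}$ closes the argument. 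The main—and essentially only—subtlety compared with Theorem \ref{thm:6} is this squaring-and-summing step, which replaces the weak-type distribution-function computation and relies on the $\ell^2$ triangle inequality in its quadratic form; once the hypothesis $Q \leq 2^{2s}$ is verified to absorb the error exactly, the rest is a mechanical translation.
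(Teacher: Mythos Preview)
Your proposal is correct and is precisely the argument the paper intends: the paper gives no separate proof of Theorem~\ref{thm:9}, merely remarking that ``essentially the same reasoning as in the proof of Theorem~\ref{thm:6}'' applies, and you have carried out that translation faithfully. The only cosmetic difference is that for the error term $\tilde g$ you invoke Plancherel directly on the multiplier $(e^{2\pi i\xi(r'-r)}-1)\eta_s$, whereas the proof of Theorem~\ref{thm:6} factors $\eta_s=\eta_s\eta_{s-1}$ and appeals to Lemma~\ref{lem:1} via Young's inequality; in $\ell^2$ both routes give the same $Q\,2^{-4s}$ bound, so this is immaterial.
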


\section{$\ell^2$ theory}
\label{sec:2}
We are now in the position to prove $\ell^2(\ZZ)$ boundedness of the maximal function associated to the multipliers
$(\nu_n^s : n \in \NN)$.
\begin{theorem}
	\label{thm:2}
	For each $\epsilon > 0$ there is $C > 0$ such that for all $s \in \NN_0$, and any finitely supported function 
	$f : \ZZ \rightarrow \CC$,
	\[
		\Big\|
		\sup_{n \in \NN}
		\big|
		\calF^{-1}\big(\nu_n^s \hat{f} \big)
		\big|
		\Big\|_{\ell^2}
		\leq
		C 2^{-s(\frac{1}{2}-\epsilon)} \|f\|_{\ell^2}.
	\]
\end{theorem}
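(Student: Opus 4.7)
The argument combines three ingredients: the Gauss sum bound \eqref{eq:15}, disjointness of the Fourier supports of the $\eta_s(\cdot-a/q)$ for distinct $a/q\in\scrR_s$, and the $\ell^2$ maximal inequality Theorem~\ref{thm:9} (whose case $Q=1$ bounds the maximal function of $M^\beta_{2^n}$ on $\ell^2$). By the modulation identity,
\[
\calF^{-1}\bigl(\widehat{L^{a,q}_{2^n}}(\cdot-a/q)\,\eta_s(\cdot-a/q)\,\hat f\bigr)(x) = e^{2\pi i a x/q}\bigl(L^{a,q}_{2^n}*h_{a,q}\bigr)(x),
\]
where $\widehat{h_{a,q}}(\theta)=\eta_s(\theta)\,\hat f(\theta+a/q)$. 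Since $|a/q-a'/q'|\ge (qq')^{-1}\ge 2^{-2s-2}$ exceeds the support radius $2^{-4s-1}$ of $\eta_s$, the supports of $\eta_s(\cdot-a/q)$ are pairwise disjoint, so Plancherel yields $\sum_{a/q\in\scrR_s}\|h_{a,q}\|_{\ell^2}^2\le\|f\|_{\ell^2}^2$.

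Using \eqref{eq:41}--\eqref{eq:42}, the bound $|G(\chi,a)|\le C_\epsilon q^{-1/2+\epsilon}\lesssim 2^{-s(1/2-\epsilon)}$ from \eqref{eq:15}, and Theorem~\ref{thm:9} to control $\|\sup_n|M^{\beta_q}_{2^n}*h_{a,q}|\|_{\ell^2}\le C\|h_{a,q}\|_{\ell^2}$, one obtains the per-atom maximal estimate
\[
\Bigl\|\sup_n\bigl|L^{a,q}_{2^n}*h_{a,q}\bigr|\Bigr\|_{\ell^2}\le C_\epsilon\,2^{-s(1/2-\epsilon)}\,\|h_{a,q}\|_{\ell^2}.
\]
Combined with the disjoint-support orthogonality, Plancherel also gives the uniform-in-$n$ estimate $\|\calF^{-1}(\nu^s_n\hat f)\|_{\ell^2}\le C_\epsilon 2^{-s(1/2-\epsilon)}\|f\|_{\ell^2}$, while a direct computation using \eqref{eq:45} and \eqref{eq:24} together with the Gauss bound yields the summable-in-$n$ differences estimate $\sum_n\|\calF^{-1}((\nu^s_{n+1}-\nu^s_n)\hat f)\|_{\ell^2}^2\le C_\epsilon 2^{-s(1-2\epsilon)}\|f\|_{\ell^2}^2$ (the inner sum $\sum_n\min\{2^n|\theta|,(2^n|\theta|)^{-1}\}^2$ converges uniformly in $\theta$, and the exceptional contribution $(1-\beta_q)^2\sum_n 2^{2n(\beta_q-1)}$ is likewise bounded).

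The decisive step is transferring these fixed-$n$ and square-difference bounds into a maximal $\ell^2$ estimate without the $(\#\scrR_s)^{1/2}\simeq 2^s$ loss that a naive triangle inequality would give. I plan to achieve this by a Littlewood--Paley decomposition of $\hat f$ into annuli $|\xi-a/q|\simeq 2^{-k}$ around each $a/q\in\scrR_s$: on such an annulus, \eqref{eq:45} and \eqref{eq:24} show that $\widehat{L^{a,q}_{2^n}}(\xi-a/q)$ varies in $n$ only within a bounded window $n\simeq k$, so $\sup_n$ on each Littlewood--Paley piece reduces to a supremum over $O(1)$ scales, controlled via Plancherel and the Gauss bound. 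The pieces recombine orthogonally in $k$ through a Fefferman--Stein-type vector-valued maximal/square function argument, using the disjointness of the annuli both across $k$ and across $a/q$. The principal obstacle is precisely this restoration of orthogonality under $\sup_n$: Fourier disjointness in $a/q$ survives at fixed $n$, but under the supremum different atoms may peak at different scales, and one must exploit the effective $n$-localization of each atom to recover the orthogonality that prevents combinatorial blow-up in $\#\scrR_s$.
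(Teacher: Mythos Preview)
Your preliminary observations are sound: the modulation identity, the disjointness of the $\eta_s(\cdot-a/q)$ supports, the per-atom maximal bound via Theorem~\ref{thm:9}, and the square-summable differences $\sum_n\|\calF^{-1}((\nu^s_{n+1}-\nu^s_n)\hat f)\|_{\ell^2}^2\lesssim 2^{-s(1-2\epsilon)}\|f\|_{\ell^2}^2$ are all correct. The gap is exactly where you locate it, and the Littlewood--Paley sketch does not close it.

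Two concrete obstructions. First, the square-summable difference bound alone does not control $\sup_n$: pointwise one only has $\sup_n|g_n|\le|g_0|+\sum_n|g_{n+1}-g_n|$, and converting the $\ell^2$-in-$n$ bound to an $\ell^1$-in-$n$ sum costs the square root of the number of scales, which is unbounded. Second, your claim that on the annulus $|\xi-a/q|\simeq2^{-k}$ the multiplier ``varies in $n$ only within a bounded window $n\simeq k$'' fails for the exceptional piece: $\widehat{M^{\beta_q}_{2^n}}(\theta)\approx\beta_q^{-1}2^{n(\beta_q-1)}$ for $2^n|\theta|\ll1$, which is essentially constant ($\approx1$) over all $n\ll k$ when $\beta_q$ is close to $1$, so no scale localisation occurs. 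Even for the principal part $\widehat{M_{2^n}}$, after the annular decomposition you still face at each $k$ a sum over $\sim2^{2s}$ modulated atoms, and ``recombining orthogonally in $k$'' under the supremum would require a pointwise square-function domination that is unavailable: the phases $e^{2\pi i ax/q}$ destroy positivity, so there is no comparison with a Hardy--Littlewood maximal function, and Fefferman--Stein controls maximal functions of the individual $h_{a,q}$, not of their modulated sum.

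The paper resolves this by splitting the supremum at $n=2^{s+4}$. For $n<2^{s+4}$ one applies the Rademacher--Menshov inequality (dyadic-block telescoping), which turns your square-summable differences into a maximal bound at the price of a factor $(s+1)$, absorbed into the $\epsilon$-loss. For $n\ge2^{s+4}$ a genuinely different mechanism is used. After reducing via Landau's theorem to $O(\log s)$ values of $\beta$, one sets $Q_s=4\prod_{p<2^{s+1}}p$ and observes that $x\mapsto\sum_{a/q}G(\chi_q,a)e^{2\pi i x a/q}$ is $Q_s$-periodic, while the convolution $M^\beta_{2^n}*h$ changes by $O(Q_s2^{-n})$ under a shift $|u|\le Q_s$; since $2^n\ge2^{2^{s+4}}\gg Q_s$, one may average the modulation variable over a full period, decoupling it from the convolution variable. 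This reduces matters to Theorem~\ref{thm:9} applied to the single function $J(u,\cdot)=\sum_{a/q}G(\chi_q,a)e^{2\pi i u a/q}\calF^{-1}(\eta_s\hat f(\cdot+a/q))$, whose $\ell^2$ norm (averaged in $u$) is controlled by the disjoint-support orthogonality and the Gauss bound \eqref{eq:15}. This periodic-averaging step is the idea your plan is missing.
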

\begin{proof}
	We divide the supremum into two parts: $0 \leq n < 2^{s+4}$ and $2^{s+4} \leq n$. Then the following holds true.
	\begin{claim}
		For each $\epsilon > 0$ there is $C > 0$ such that for all $s \in \NN_0$, and any finitely supported function
		$f: \ZZ \rightarrow \CC$,
		\begin{equation}
			\label{eq:26}
			\Big\|
			\sup_{0 \leq n \leq 2^{s+4}}
			\big|
			\calF^{-1}\big(
			\nu_{n}^s \hat{f} 
			\big)
			\big|
			\Big\|_{\ell^2}
			\leq
			C (s+1) 2^{-s(\frac{1}{2}-\epsilon)} \|f\|_{\ell^2}.
		\end{equation}
	\end{claim}
	For the proof, we apply \cite[Lemma 1]{mt3} to write
	\begin{equation}
		\label{eq:25}
		\sup_{0 \leq n < 2^{s+4}}
		\Big|
	    \calF^{-1}\big(
        \nu_n^s \hat{f}
	    \big)
		\Big|
		\leq
		\big|\calF^{-1}\big(\nu_0^s \hat{f} \big)\big|
		+
		\sqrt{2}
		\sum_{i = 0}^{s+4}
		\Big(
		\sum_{j = 0}^{2^{s+4-i}-1} 
		\big|
		\calF^{-1}\big((\nu_{(j+1) 2^i}^s - \nu_{j2^i}^s) \hat{f}
		\big|^2
		\Big)^{\frac{1}{2}}.
	\end{equation}
	Let us fix $i \in \{0, \ldots, s\}$. Then by the Plancherel's theorem we get
	\begin{align*}
		&\sum_{j = 0}^{2^{s+4-i}-1}
        \Big\|
        \calF^{-1}\big((\nu_{(j+1) 2^i}^s - \nu_{j2^i}^s) \hat{f}\big)
        \Big\|_{\ell^2}^2\\ 
		&\qquad\qquad=
		\sum_{j = 0}^{2^{s+4-i}-1}
		\sum_{a/q \in \scrR_s}
		\int_0^1
		\Big|\sum_{m \in I_j^i} \widehat{L^{a, q}_{2^m}} (\xi - a/q) - \widehat{L^{a, q}_{2^{m-1}}} (\xi - a/q) \Big|^2
		\eta_s(\xi - a/q)^2 
		\abs{\hat{f}(\xi)}^2 {\: \rm d}\xi
	\end{align*}
	where $I_j^i = \big\{j2^i+1, j2^i + 2, \ldots, (j+1)2^i\big\}$. By \eqref{eq:15}, we obtain
	\begin{align*}
		&
		\sum_{j = 0}^{2^{s+4-i}-1}
        \sum_{a/q \in \scrR_s}
        \int_0^1
        \Big|\sum_{m \in I_j^i} \widehat{L^{a, q}_{2^m}} (\xi - a/q) - \widehat{L^{a, q}_{2^{m-1}}} (\xi - a/q) \Big|^2
		\eta_s(\xi - a/q)^2
        \abs{\hat{f}(\xi)}^2 {\: \rm d}\xi \\
		&\qquad\qquad
		\lesssim
		2^{-s(1-\epsilon)}
		\sum_{a/q \in \scrR_s}
		\sum_{j = 0}^{2^{s+4-i}-1}
		\sum_{m, m' \in I_j^i}
		\int_0^1
		\Delta_m^q(\xi - a/q) \cdot \Delta^q_{m'}(\xi - a/q) \cdot 
		\eta_s(\xi - a/q)^2 \abs{\hat{f}(\xi)}^2 {\: \rm d}\xi,
	\end{align*}
	where $\Delta^q_m = \big|\widehat{M_{2^m}} - \widehat{M_{2^{m-1}}}\big| + 
	\big|\widehat{M^{\beta_q}_{2^m}} - \widehat{M^{\beta_q}_{2^{m-1}}}\big|$. In view of \eqref{eq:24}, we have
	\[
		\sum_{n \in \NN_0} \Delta^q_m(\xi) \lesssim
		\sum_{n \in \NN_0} \min\big\{(2^n \abs{\xi})^{-1}, 2^n \abs{\xi}\big\} + (1-\beta_q) 2^{-n(1-\beta_q)}
		\lesssim
		1,
	\]
	uniformly with respect to $\xi \in \TT$, $q \in \NN$, and $\frac{1}{2} \leq \beta_q \leq 1$. Since
	supports of $\eta_s(\cdot - a/q)$ are disjoint while $a/q$ varies over $\scrR_s$, we obtain
	\begin{align*}
		\sum_{j = 0}^{2^{s+4-i}-1}
        \Big\|
        \calF^{-1}\big((\nu_{(j+1) 2^i}^s - \nu_{j2^i}^s) \hat{f}\big)
        \Big\|_{\ell^2}^2
		&\lesssim
		2^{-s(1-\epsilon)}
		\sum_{a/q \in \scrR_s}
		\int_0^1 \eta_s(\xi - a/q)^2 \abs{\hat{f}(\xi)}^2 {\: \rm d}\xi\\
		&\lesssim
		2^{-s(1-\epsilon)}
        \|f\|_{\ell^2}^2,
	\end{align*}
	which together with \eqref{eq:25} imply \eqref{eq:26}.

	It remains now to treat supremum over $n \geq 2^{s+4}$. For each $\frac{1}{2} \leq \beta < 1$ we set
	\[
		\scrR_s^\beta = \big\{a/q \in \scrR_s :  \beta_q = \beta \big\}.
	\]
	and $\scrR_s^1 = \scrR_s$. In view of the Landau's theorem \cite[Corollary 11.9]{mv}, there are $\calO(\log s)$ 
	distinct $\beta$'s. Therefore, it suffices to show the following claim.
	\begin{claim}
		For each $\epsilon > 0$ there is $C > 0$ such that for all $s \in \NN_0$, $\frac{1}{2} \leq \beta \leq 1$,
		any finitely supported function $f: \ZZ \rightarrow \CC$,
		\begin{equation}
			\label{eq:28}
			\Big\|
            \sup_{2^{s+4} \leq n}
            \Big|
			\sum_{a/q \in \scrR_s^\beta}
			G(\chi_q, a)
            \calF^{-1}\big(\widehat{M_{2^n}^{\beta}}(\cdot - a/q) \eta_s(\cdot - a/q) \hat{f}
            \big)
            \Big|
            \Big\|_{\ell^2}
            \leq
			C
			2^{-s(\frac{1}{2}-\epsilon)} \|f\|_{\ell^2}.
		\end{equation}
	\end{claim}
	Let us fix $\frac{1}{2} \leq \beta \leq 1$. We define
	\[
		I(x, y) = \sup_{2^{s+4} \leq n} 
		\Big|
		\sum_{a/q \in \scrR_s^\beta}
		G(\chi_q, a) e^{2\pi i x a/q} 
		\calF^{-1}\big(\widehat{M_{2^n}^{\beta}} \eta_s \hat{f}(\cdot + a/q)\big)(y)
		\Big|,
	\]
	and
	\[
		J(x, y) = \sum_{a/q \in \scrR_s^\beta} G(\chi_q, a) e^{2 \pi i x a/q} 
		\calF^{-1}\big(\eta_s \hat{f}(\cdot + a/q)\big)(y).
	\]
	Observe that the functions $x \mapsto I(x, y)$ and $x \mapsto J(x, y)$ are $Q_s$ periodic where
	\[
		Q_s = 4 \prod_{p \in \PP_{2^{s+1}}} p \lesssim e^{2^{s+2}}.
	\]
	By the Plancherel's theorem, for $u \in \ZZ_{Q_s}$, we have
	\begin{align*}
		&
		\big\|
		\calF^{-1}\big(\widehat{M^{\beta}_{2^n}} \eta_s \hat{f}(\cdot + a/q) \big)(x+u)
		-
		\calF^{-1}\big(\widehat{M^{\beta}_{2^n}} \eta_s \hat{f}(\cdot + a/q) \big)(x)
		\big\|_{\ell^2(x)}\\
		&\qquad\qquad=
		\bigg\|
		\big(1 - e^{2\pi i \xi u}\big) \widehat{M^{\beta}_{2^n}}(\xi) \eta_s(\xi) \hat{f}(\xi+a/q) 
		\bigg\|_{L^2({\rm d}\xi)} \\
		&\qquad\qquad\lesssim
		2^{-n} \abs{u} \cdot \big\|\eta_s \hat{f}(\cdot + a/q)\big\|_{L^2},
	\end{align*}
	because by \eqref{eq:14},
	\[
		\sup_{\xi \in \TT}{\abs{\xi} \cdot \abs{\widehat{M^{\beta}}_{2^n}(\xi)}} \lesssim 2^{-n}.
	\]
	Therefore, by the triangle inequality
	\[
		\Big|
		\big\| I(x, x+u) \big\|_{\ell^2(x)} - \big\|I(x, x) \big\|_{\ell^2(x)}
		\Big|
		\lesssim
		Q_s \sum_{n \geq 2^{s+4}} 2^{-n} \sum_{a/q \in \scrR_s} \big\|\eta_s \hat{f}(\cdot + a/q)\big\|_{L^2}.
	\]
	Since $\scrR_s$ contains at most $2^{2(s+1)}$ rational numbers, by the Cauchy--Schwarz inequality we get
	\[
		\sum_{a/q \in \scrR_s} \big\|\eta_s \hat{f}(\cdot + a/q)\big\|_{L^2}
		\leq
		2^{s+1} \|f\|_{\ell^2}.
	\]
	Observe that
	\[
		Q_s \cdot 2^{-2^{s+4}} \cdot 2^{s+1} \leq 2^{2^{s+3} - 2^{s+4} + s + 1} \leq 2^{-s},
	\]
	thus
	\[
		\big\| I(x, x) \big\|_{\ell^2(x)}
		\lesssim
		\big\|I(x, x+u) \big\|_{\ell^2(x)}
		+
		2^{-s} \|f\|_{\ell^2}.
	\]
	Hence,
	\begin{equation}
		\label{eq:27}
		\big\|I(x, x) \big\|_{\ell^2(x)}^2
		\lesssim
		\frac{1}{Q_s} \sum_{u = 1}^{Q_s} \big\|I(x, x+u) \big\|_{\ell^2(x)}^2 + 2^{-2s} \|f\|_{\ell^2}^2.
	\end{equation}
	Now, by multiple change of variables and periodicity we get
	\[
		\sum_{u = 1}^{Q_s} \big\|I(x, x+u) \big\|_{\ell^2(x)}^2
		=
		\sum_{u = 1}^{Q_s} \sum_{x \in \ZZ} I(x-u, x)^2
		=
		\sum_{x \in \ZZ} \sum_{u = 1}^{Q_s} I(u, x)^2
		=
		\sum_{u = 1}^{Q_s} \big\|I(u, x) \big\|_{\ell^2(x)}^2.
	\]
	Using Theorem \ref{thm:9}, we can estimate
	\begin{align*}
		\big\|I(u, x) \big\|_{\ell^2(x)}
		=
		\Big\|
		\sup_{2^{s+4} \leq n} \Big|
		\calF^{-1}\big(\widehat{M^\beta_{2^n}} \eta_s J(u, \cdot) \big)
		\Big|
		\Big\|_{\ell^2}
		\lesssim
		\big\|J(u, x)\big\|_{\ell^2(x)}.
	\end{align*}
	Notice that
	\[
		\sum_{u = 1}^{Q_s} \big\|J(u, x) \big\|_{\ell^2(x)}^2 
		=
		\sum_{x \in \ZZ} \sum_{u = 1}^{Q_s} J(u, x)^2
		=
		\sum_{u = 1}^{Q_s} \sum_{x \in \ZZ} J(x-u, x)^2
		=
		\sum_{u = 1}^{Q_s} \big\|J(x, x+u) \big\|_{\ell^2(x)}^2.
	\]
	Since supports of $\eta_s(\cdot - a/q)$ are disjoint while $a/q$ varies over $\scrR_s$, by \eqref{eq:15}
	we get
	\begin{align*}
		\big\|J(x, x+u) \big\|_{\ell^2(x)}^2
		&=
		\int_0^1
		\Big|
		\sum_{a/q \in \scrR_s^\beta} G(\chi_q, a) e^{2\pi i \xi u a/q} \eta_s(\xi - a/q)
		\Big|^2 \abs{\hat{f}(\xi)}^2 {\:\rm d}\xi \\
		&\lesssim
		2^{-s(1-\epsilon)} \|f\|_{\ell^2}^2.
	\end{align*}
	Therefore,
	\[
		\sum_{u = 1}^{Q_s} \big\|I(x, x+u) \big\|_{\ell^2(x)}^2 \lesssim 2^{-s(1-\epsilon)} Q_s \|f\|_{\ell^2}^2,
	\]
	which together with \eqref{eq:27} imply \eqref{eq:28} and the theorem follows.
\end{proof}
Given $t > 0$ and $n > t$, we define the multiplier
\begin{align*}
	\Pi_n^t(\xi) 
	&= \sum_{0 \leq s \leq \sqrt{t}} \nu_n^s(\xi) \\
	&= 
	\sum_{0 \leq s \leq \sqrt{t}}
	\sum_{a/q \in \scrR_s}
	\widehat{L^{a, q}_{2^n}}(\xi - a/q)
	\eta_s(\xi - a/q).
\end{align*}
\begin{corollary}
	\label{cor:3}
	There are $C, c > 0$ such that for each $t > 0$, and any finitely supported function $f \in \ZZ \rightarrow \CC$,
	\[
		\Big\|
		\sup_{t \leq n}
		\Big|
		\calM_{2^n} f - \calF^{-1}\big(\Pi_n^t \hat{f} \big)
		\Big|
		\Big\|_{\ell^2}
		\leq
		C \exp\big(-c \sqrt{t}\big) \|f\|_{\ell^2}.
	\]
\end{corollary}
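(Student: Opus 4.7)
The plan is to split the error $\calM_{2^n} f - \calF^{-1}(\Pi_n^t \hat f) = \calF^{-1}((\frakm_{2^n} - \Pi_n^t)\hat f)$ into two pieces by inserting the full multiplier $\nu_n = \sum_{s \geq 0} \nu_n^s$ and writing
\[
    \frakm_{2^n} - \Pi_n^t = (\frakm_{2^n} - \nu_n) + \sum_{s > \sqrt{t}} \nu_n^s.
\]
The two pieces correspond, respectively, to the pointwise multiplier comparison proved in Theorem \ref{thm:1} and to the square-summable-in-$s$ maximal bound established in Theorem \ref{thm:2}; it is enough to control each in $\ell^2$.

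For the first piece, Theorem \ref{thm:1} gives the uniform estimate $|\frakm_{2^n}(\xi) - \nu_n(\xi)| \leq C \exp(-c\sqrt n)$, so Plancherel yields $\|\calF^{-1}((\frakm_{2^n} - \nu_n)\hat f)\|_{\ell^2} \leq C \exp(-c\sqrt n) \|f\|_{\ell^2}$ for every $n$. To pass to a maximal estimate I would use the crude bound $\sup_{n \geq t} |g_n|^2 \leq \sum_{n \geq t} |g_n|^2$, which gives
\[
    \Big\|\sup_{n \geq t} |\calF^{-1}((\frakm_{2^n}-\nu_n)\hat f)|\Big\|_{\ell^2}^2 \leq C^2 \|f\|_{\ell^2}^2 \sum_{n \geq t} \exp(-2c\sqrt n),
\]
and an integral comparison (via the substitution $u = \sqrt{n}$) bounds the tail sum by $\sqrt t \exp(-2c\sqrt t)$, which is $\leq C \exp(-c'\sqrt t)$ for any $c' < 2c$.

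For the second piece, I would fix any $\epsilon < 1/2$, say $\epsilon = 1/4$, apply Theorem \ref{thm:2} to each scale $s > \sqrt t$ separately, and sum in $s$ using the triangle inequality for the supremum and Minkowski in $\ell^2$:
\[
    \Big\|\sup_{t \leq n} \Big|\sum_{s > \sqrt t} \calF^{-1}(\nu_n^s \hat f)\Big|\Big\|_{\ell^2} \leq \sum_{s > \sqrt t} \Big\|\sup_{n \in \NN}|\calF^{-1}(\nu_n^s \hat f)|\Big\|_{\ell^2} \leq C \sum_{s > \sqrt t} 2^{-s/4} \|f\|_{\ell^2} \lesssim 2^{-\sqrt t/4}\|f\|_{\ell^2},
\]
and $2^{-\sqrt t/4} \leq \exp(-c\sqrt t)$ with $c = (\log 2)/4$. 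A final triangle inequality combining the two pieces yields the claimed bound.

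There is no real obstacle here: the argument is a bookkeeping synthesis of the two main results of the preceding sections. The only point warranting care is that the decay rate $\exp(-c\sqrt n)$ in Theorem \ref{thm:1} must survive summation over $n \geq t$ (which it does up to an adjustment of the constant $c$), and that the geometric factor $2^{-s(1/2-\epsilon)}$ in Theorem \ref{thm:2} must be summable beyond $\sqrt t$ with the exponential rate one wants — taking $\epsilon$ strictly less than $1/2$ and using the geometric series both suffice.
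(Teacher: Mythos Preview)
Your proof is correct and follows essentially the same route as the paper: the identical decomposition $\frakm_{2^n} - \Pi_n^t = (\frakm_{2^n} - \nu_n) + \sum_{s > \sqrt{t}} \nu_n^s$, Plancherel plus Theorem~\ref{thm:1} and the crude $\sup \leq (\sum)^{1/2}$ bound for the first piece, and Theorem~\ref{thm:2} with $\epsilon = 1/4$ summed over $s > \sqrt{t}$ for the second. Your write-up is slightly more detailed on the tail-sum estimates, but the argument is the same.
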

\begin{proof}
	Since
	\begin{align*}
    	\mathfrak{m}_{2^n} - \Pi_n^t
		&= \big(\mathfrak{m}_{2^n} - \nu_n\big)
		+ \sum_{s > \sqrt{t}} \nu_n^s,
	\end{align*}
	our assertion follows from Theorem \ref{thm:1} and Theorem \ref{thm:2}. Indeed, by the Plancherel's theorem 
	and Theorem \ref{thm:1} we get
	\[
		\Big\|\sup_{t \leq n} \Big| \calF^{-1}\big((\mathfrak{m}_{2^n} - \nu_n)\hat{f}\big) \Big|\Big\|_{\ell^2}
		\lesssim
		\Big(\sum_{n \geq t} \exp\big(-2 c \sqrt{n} \big)\Big)^\frac{1}{2} \|f\|_{\ell^2}.
	\]
	On the other hand, by Theorem \ref{thm:2},
	\[
		\Big\|\sup_{t \leq n} \Big| \sum_{s > \sqrt{t}} \calF^{-1}\big( \nu_n^s \hat{f} \big) \Big|\Big\|_{\ell^2}
		\lesssim
		\sum_{s > \sqrt{t}} 2^{-\frac{s}{4}} \|f\|_{\ell^2},
	\]
	which concludes the proof.
\end{proof}

\section{Weak type estimates}
\label{sec:1}
In this section we investigate the weak type estimates for the multipliers $\big(\Pi_n^t : n \geq t\big)$. Then together
with results from Section \ref{sec:2} we deduce Theorem \ref{main_thm:2}.
\begin{theorem}
	\label{thm:3}
	There is $C > 0$ such that for all $t > 0$ and any finitely supported function $f: \ZZ \rightarrow \CC$,
	\[
		\sup_{\lambda > 0}{
		\lambda \cdot
		\Big|\Big\{
		x \in \ZZ : 
		\sup_{t \leq n}
		\big|
		\calF^{-1}
		\big(
		\Pi_n^t \hat{f}
		\big)(x)
		\big| > \lambda
		\Big\}\Big|}
		\leq
		C
		t  
		\|f\|_{\ell^1}.
	\]
\end{theorem}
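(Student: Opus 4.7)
The plan is to decompose $\Pi_n^t$ by denominator $q$, reduce via the log-convexity of $\ell^{1,\infty}(\ZZ)$ to a single-denominator weak-type bound, and then prove that bound by combining the Gauss-sum identity of Theorem \ref{thm:7} with the equidistribution result of Theorem \ref{thm:6}. Concretely, I would write $\Pi_n^t = \sum_{1 \leq s \leq \sqrt t} \sum_{q : 2^s \leq q < 2^{s+1}} P_n^{s,q}$ plus the harmless $s=0$ term (covered by the Hardy--Littlewood maximal inequality via \eqref{eq:41}), where
\[
    P_n^{s,q}f(x) = \sum_{a \in A_q} \calF^{-1}\bigl(\widehat{L^{a,q}_{2^n}}(\cdot - a/q)\,\eta_s(\cdot - a/q)\,\hat f\bigr)(x)
\]
collects the terms of $\nu_n^s$ with fixed denominator $q \in \scrR_s$. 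Using a Fefferman--Bourgain--Ionescu-style quasi-triangle argument in $\ell^{1,\infty}$, paired with the $\ell^2$ bound of Corollary \ref{cor:3}, the global estimate would follow from the single-denominator bound
\[
    \Bigl|\bigl\{x \in \ZZ : \sup_{t \leq n} |P_n^{s,q} f(x)| > \lambda\bigr\}\Bigr| \lesssim \frac{1}{\lambda\,\varphi(q)}\,\|f\|_{\ell^1}
\]
for each $q \in [2^s, 2^{s+1})$. Summing these estimates over the $\lesssim 2^s$ denominators at scale $s$ costs a log-convexity loss of $\log(2^s) \simeq s$, while the Mertens bound $\sum_{q\in[2^s,2^{s+1})}\varphi(q)^{-1} = O(1)$ keeps the sum of prefactors bounded; summing over $s \leq \sqrt t$ then yields the total $\sum_{s \leq \sqrt t} s \simeq t$, matching the theorem.

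To establish the single-denominator bound, I would unfold \eqref{eq:41}--\eqref{eq:42} so that $P_n^{s,q}$ becomes an at-most-two-term combination of operators
\[
    U_n^{\chi,\beta}f(x) = \sum_{a \in A_q} G(\chi,a)\,\calF^{-1}\bigl(\widehat{M_{2^n}^\beta}(\cdot - a/q)\,\eta_s(\cdot - a/q)\,\hat f\bigr)(x), \qquad (\chi,\beta) \in \{(\mathds{1}_q,1),(\chi_q,\beta_q)\}.
\]
Translating the Fourier variable by $a/q$ and interchanging the $a$-sum with the inverse Fourier transform converts this into the physical-space expression
\[
    U_n^{\chi,\beta} f(x) = \sum_y \bigl(M_{2^n}^\beta \ast \calF^{-1}\eta_s\bigr)(y)\,S_\chi(y)\,f(x-y), \qquad S_\chi(y) := \sum_{a \in A_q} G(\chi,a)\,e^{2\pi i y a/q}.
\]
Theorem \ref{thm:7} then provides the decisive input: with $r = \gcd(q,y)$, the sum $S_\chi(y)$ vanishes outside the admissible set, and otherwise equals $\mu(r)\,q_0\,\varphi(r)\varphi(q)^{-1}\,\chi^\star(-y)$, simultaneously producing the required $\varphi(q)^{-1}$ factor and forcing $y$ to lie in a union of arithmetic progressions. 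Decomposing $\ZZ$ into residue classes modulo $Q := q$ (which satisfies $Q < 2^{s+1} \leq 2^{2s}$ and so meets the hypothesis of Theorem \ref{thm:6}) expresses $U_n^{\chi,\beta}f$ as a sum of restricted convolutions of $M_{2^n}^\beta$ with $\calF^{-1}(\eta_s \hat f)$, each carrying a prefactor coming from $S_\chi$. Theorem \ref{thm:6} bounds each such restricted maximal operator in weak-$\ell^1$ by $\|\calF^{-1}(\eta_s \hat f)(Qx+r)\|_{\ell^1(x)}$, which Lemma \ref{lem:2} identifies with $Q^{-1}\|\calF^{-1}(\eta_s \hat f)\|_{\ell^1} \lesssim Q^{-1}\|f\|_{\ell^1}$. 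After summing over divisors $r \mid q/q_0$ and applying standard divisor identities, the $q_0 \varphi(r)$ prefactors combine with $Q^{-1}$ to collapse to $\varphi(q)^{-1}$, yielding the required bound.

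The main obstacle is executing the log-convexity reduction so that only a single $\log$ per dyadic scale $s$ is paid; this is precisely the point where the $\ell^2$ estimate of Corollary \ref{cor:3} is genuinely needed, since a naive quasi-triangle inequality in $\ell^{1,\infty}$ alone would lose polynomially in the number of denominators. A subsidiary nuisance is bookkeeping the conductor $q_0$ and divisor prefactors arising from Theorem \ref{thm:7} so that they collapse cleanly to $\varphi(q)^{-1}$, rather than producing a spurious $q/\varphi(q)$-type factor that would destroy the final count.
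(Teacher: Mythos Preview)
Your approach is essentially the paper's: decompose $\Pi_n^t$ by denominator $q$, prove the single-$q$ weak-type bound $\lesssim (\lambda\varphi(q))^{-1}\|f\|_{\ell^1}$ by combining Theorem~\ref{thm:6} with the Gauss-sum evaluation of Theorem~\ref{thm:7} (your divisor collapse $q_0\sum_{r\mid q/q_0}\varphi(r)/\varphi(q) \cdot q^{-1} = \varphi(q)^{-1}$ is exactly what the paper does), and then reassemble via the log-convexity of $\ell^{1,\infty}$; your count $\sum_{s\le\sqrt t} s\simeq t$ is the paper's $\Phi(t)=\sum_{q<2^{\sqrt t}}(1+\log q)/\varphi(q)\lesssim t$.

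One correction, though: Corollary~\ref{cor:3} is \emph{not} used in this proof, and there is no obstacle where you flag one. The Kalton/Stein--Weiss log-convexity of $\ell^{1,\infty}$ already gives only a logarithmic loss in the number of summands --- of the form $\|\sum_k g_k\|_{1,\infty}\lesssim \sum_k(1+\log k)\|g_k\|_{1,\infty}$ --- not a polynomial one, so the single-$q$ bounds sum directly to $\lambda^{-1}\Phi(t)\|f\|_{\ell^1}$ with no $\ell^2$ input. The $\ell^2$ estimate of Corollary~\ref{cor:3} enters the paper only in Theorem~\ref{thm:5}, where it is balanced against the present theorem to optimize over $t$.
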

\begin{proof}
	Let us fix $2^s \leq q < 2^{s+1}$ for some  $1 \leq s \leq \sqrt{t}$. Let $\frac{1}{2} \leq \beta \leq 1$. Suppose
	that $\chi$ is a quadratic Dirichlet character modulo $q$ induced by $\chi^\star$ having the conductor $q_0$. We claim
	that the following holds true.
	\begin{claim}
		\label{clm:1}
		There is $C > 0$ such that for any finitely supported function $f: \ZZ \rightarrow \CC$,
		\begin{equation}
			\label{eq:36}
			\sup_{\lambda > 0}{
			\lambda \cdot \Big|\Big\{
			\sup_{t \leq n} 
			\Big|
			\sum_{a \in A_q}
			G(\chi, a)
    	    \calF^{-1}
    	    \big(\widehat{M^{\beta}_{2^n}}(\cdot - a/q) \eta_s(\cdot - a/q) 
			\hat{f}\big)
			\Big| > \lambda
			\Big\}\Big|}
			\leq
			C
			\frac{1}{\varphi(q)}
			\| f \|_{\ell^1}.
		\end{equation}
		The constant $C$ is independent of $q$, $\beta$ and $\chi$. 
	\end{claim}
	Let us first see that from Claim \ref{clm:1}, we can deduce the theorem. Indeed, from \eqref{eq:36} we easily
	get
	\[
		\Big|\Big\{
        \sup_{t \leq n}
        \Big|
        \sum_{a \in A_q}
        \calF^{-1}
        \big(\widehat{L^{a, q}_{2^n}}(\cdot - a/q) \eta_s(\cdot - a/q)
        \hat{f}\big)
        \Big| > \lambda
        \Big\}\Big|
        \leq
        C
		\frac{1}{\lambda \varphi(q)}
		\| f \|_{\ell^1}.
	\]
	Recall that (see e.g. \cite{sita}),
	\[
		\sum_{1 \leq q < 2^{\sqrt{t}}} \frac{1}{\varphi(q)} \simeq \sqrt{t},
	\]
	thus
	\[
		\Phi(t) = \sum_{1 \leq q < 2^{\sqrt{t}}} \frac{1+\log q}{\varphi(q)}
		\lesssim t.
	\]
	Hence, by log-convexity of $\ell^{1,\infty}(\ZZ)$, (see \cite{kal, stwa}) we obtain
	\begin{align*}
		&\Big|\Big\{
		x \in \ZZ : 
		\sup_{t \leq n}
		\big|
		\calF^{-1}
		\big(
		\Pi_n^t \hat{f}
		\big)(x)
		\big| > \lambda
		\Big\}\Big| \\
		&=
		\Big|\Big\{
        \sup_{t \leq n}
        \Big|
		\sum_{0 \leq s \leq \sqrt{t}}
		\sum_{q = 2^s}^{2^{s+1}-1}
		\sum_{a \in A_q}
        \calF^{-1}
        \big(
		\widehat{L^{a, q}_{2^n}}(\cdot - a/q) \eta_s(\cdot - a/q) \hat{f}
        \big)
        \Big| > \lambda
        \Big\}\Big| \\
		&\qquad\qquad\leq
		\Big|\Big\{
		\sum_{0 \leq s \leq \sqrt{t}}
		\sum_{q = 2^s}^{2^{s+1}-1}
		\frac{1}{\varphi(q)} \varphi(q)
        \sup_{t \leq n}
        \Big|
		\sum_{a \in A_q}
        \calF^{-1}
        \big(
        \widehat{L^{a, q}_{2^n}}(\cdot - a/q) \eta_s(\cdot - a/q) \hat{f}
        \big)
        \Big| > \lambda
        \Big\}\Big|\\
		&\qquad\qquad\lesssim
		\lambda^{-1} \Phi(t)
		\|f\|_{\ell^1},
	\end{align*}
	which is bounded by $C \lambda^{-1} t \|f\|_{\ell^1}$.

	What is left now is to prove Claim \ref{clm:1}. Let $r \in \{1, \ldots, q\}$. For $x \equiv r \bmod q$, we have
	\begin{align*}
		\sum_{a \in A_q}
		G(\chi, a)
        \calF^{-1}
        \big(\widehat{M^{\beta}_{2^n}}(\cdot - a/q) \eta_s(\cdot - a/q)
        \hat{f}\big)(x)
		&=
		\sum_{a \in A_q}
		G(\chi, a) e^{2\pi i r a /q}
		\calF^{-1}
        \big(\widehat{M^{\beta}_{2^n}} \eta_s \hat{f}(\cdot + a/q) \big)(x) \\
		&=
		\calF^{-1}\big(\widehat{M^\beta_{2^n}} \eta_s F_q(\cdot; r)\big)(x),
	\end{align*}
	where
	\[
		F_q(\xi; r) = \sum_{a \in A_q} G(\chi, a) \hat{f}(\xi + a/q) e^{2\pi i r a / q}.
	\]
	Hence, by Theorem \ref{thm:6}, we obtain
	\begin{align*}
		&\Big|\Big\{
		x \in \ZZ : \sup_{t \leq n}
	    \Big|
        \sum_{a \in A_q}
        G(\chi, a)
        \calF^{-1}
        \big(\widehat{M^{\beta}_{2^n}}(\cdot - a/q) \eta_s(\cdot - a/q)
        \hat{f}\big)(x)
        \Big| > \lambda
        \Big\}\Big| \\
		&\qquad\qquad=
		\sum_{r = 1}^q
		\big|\big\{
        x \in \ZZ : \sup_{t \leq n}
        \big|
        \calF^{-1}
        \big(\widehat{M^{\beta}_{2^n}} \eta_s F_q(\cdot; r)
		\big)(q x + r)
        \big| > \lambda
        \big\}\big| \\
		&\qquad\qquad\lesssim
		\sum_{r = 1}^q
		\lambda^{-1}
		\big\|\calF^{-1}\big(\eta_s F_q(\cdot ; r) \big)(qx+r)\big\|_{\ell^1(x)}.
	\end{align*}
	Next, by Young's convolution inequality we get
	\begin{align*}
		\sum_{r = 1}^q
        \big\|\calF^{-1}\big(\eta_s F_q(\cdot ; r) \big)(qx+r)\big\|_{\ell^1(x)}
		&=
		\Big\|\sum_{a \in A_q} G(\chi, a) \calF^{-1}\big(\eta_s(\cdot - a/q) \hat{f} \big)\Big\|_{\ell^1} \\
		&\leq
		\Big\|\sum_{a \in A_q} G(\chi, a) \calF^{-1}\big(\eta_s(\cdot - a/q)\big) \Big\|_{\ell^1} \|f\|_{\ell^1},
	\end{align*}
	and
	\[
		\Big\|\sum_{a \in A_q} G(\chi, a) \calF^{-1}\big(\eta_s(\cdot - a/q)\big) \Big\|_{\ell^1}
		=
		\Big\|
		\sum_{a \in A_q} G(\chi, a) e^{2\pi i x a / q} \calF^{-1}\big(\eta_s\big)(x) \Big\|_{\ell^1(x)}.
	\]
	Now, by Theorem \ref{thm:7}, we can compute
	\begin{align*}
		\Big\|
        \sum_{a \in A_q} G(\chi, a) e^{2\pi i x a / q} \calF^{-1}\big(\eta_s\big)(x) \Big\|_{\ell^1(x)}
		&=
		\sum_{r \mid q/q_0} 
		\Big\|
        \sum_{a \in A_q} G(\chi, a) e^{2\pi i r a / q} \calF^{-1}\big(\eta_s\big)(q x + r) 
		\Big\|_{\ell^1(x)} \\
		&\leq
		q_0
		\sum_{r \mid q/q_0}
		\frac{\varphi(r)}{\varphi(q)}
		\big\|
		\calF^{-1}\big(\eta_s\big)(q x + r)
        \big\|_{\ell^1(x)} \\
		&\lesssim
		\frac{q_0}{q} \sum_{r \mid q/q_0} \frac{\varphi(r)}{\varphi(q)},
	\end{align*}
	where in the last inequality we have used Lemma \ref{lem:2} together with Lemma \ref{lem:1}. Since 
	(see e.g. \cite{sita})
	\[
		\sum_{r \mid q/q_0} \varphi(r) = \frac{q}{q_0},
	\]
	we conclude that
	\[
		\Big\|
        \sum_{a \in A_q} G(\chi, a) e^{2\pi i x a / q} \calF^{-1}\big(\eta_s\big)(x) \Big\|_{\ell^1(x)}
		\lesssim
		\frac{1}{\varphi(q)},
	\]
	proving the claim and the theorem follows.
\end{proof}

\begin{theorem}
	\label{thm:5}
	There is $C > 0$ such that for any subset $F \subset \ZZ$ of a finite cardinality and all $0 < \lambda < 1$,
	\[
		\Big|
		\Big\{x \in \ZZ : 
		\sup_{n \in \NN} \calM_{2^n} (\ind{F})(x) > \lambda
		\Big\}
		\Big|
		\leq
		C
		\lambda^{-1} \log^2\big(e/ \lambda\big) \abs{F}.
	\]
\end{theorem}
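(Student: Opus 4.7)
The plan is to execute the Fefferman-type splitting scheme foreshadowed in the introduction, combining the weak $\ell^{1,\infty}$ estimate of Theorem \ref{thm:3} (with constant proportional to $t$) with the $\ell^2$ estimate of Corollary \ref{cor:3} (with constant proportional to $\exp(-c\sqrt{t})$), and optimizing in $t$ at the end.

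Concretely, for a parameter $t > 0$ to be selected later I would split
\[
	\sup_{n \in \NN} \calM_{2^n}(\ind{F})(x)
	\leq
	\sup_{n < t} \calM_{2^n}(\ind{F})(x)
	+
	\sup_{n \geq t} \big| \calF^{-1}\big(\Pi_n^t \widehat{\ind{F}}\big)(x) \big|
	+
	\sup_{n \geq t} \big| \calM_{2^n}(\ind{F})(x) - \calF^{-1}\big(\Pi_n^t \widehat{\ind{F}}\big)(x) \big|,
\]
and estimate the three terms on their own. The small-scale term covers exactly the range on which $\Pi_n^t$ is not defined; here I would use that each $\calM_{2^n}$ is a convolution with a probability measure (since $\sum_{p \in \PP_{2^n}} \log p / \vartheta(2^n) = 1$) and therefore an $\ell^1(\ZZ)$-contraction. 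Dominating the supremum by the sum over $n < t$ and invoking Chebyshev's inequality gives a distribution bound $\lesssim \lambda^{-1} t \abs{F}$. The second term is bounded exactly the same way by Theorem \ref{thm:3} applied to $f = \ind{F}$, producing again $\lesssim \lambda^{-1} t \abs{F}$. For the third term, Corollary \ref{cor:3} together with Chebyshev in $\ell^2(\ZZ)$ produces
\[
	\Big|\Big\{x \in \ZZ : \sup_{n \geq t} \big| \calM_{2^n}(\ind{F})(x) - \calF^{-1}\big(\Pi_n^t \widehat{\ind{F}}\big)(x) \big| > \lambda/3 \Big\}\Big|
	\lesssim
	\lambda^{-2} \exp\big(-2 c \sqrt{t}\big) \abs{F}.
\]

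The balancing step is then to choose $t = C_0 \log^2(e/\lambda)$ with $C_0$ large enough that $\exp(-2 c \sqrt{t}) \leq \lambda$; this is possible because $0 < \lambda < 1$ forces $\log(e/\lambda) \geq 1$, so the inequality reduces to $C_0 \geq 1/(4c^2)$ with the $c$ from Corollary \ref{cor:3}. With this choice the third term contributes $\lesssim \lambda^{-1} \abs{F}$, while the first two contribute $\lesssim \lambda^{-1} t \abs{F} \lesssim \lambda^{-1} \log^2(e/\lambda) \abs{F}$, giving the claimed bound.

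Essentially all of the substantive work has already been done in Sections \ref{sec:4}--\ref{sec:1}, so the main obstacle here is not conceptual but rather bookkeeping: one must be confident that the parameter $t$ produced by Theorem \ref{thm:3} is exactly the right one to balance against the $\exp(-c\sqrt{t})$ decay of Corollary \ref{cor:3}, which is precisely what forces the $\log^2(e/\lambda)$ exponent in the final estimate. Any improvement of the factor $t$ in Theorem \ref{thm:3} to $\sqrt{t}$ would collapse the exponent to $\log(e/\lambda)$, but as the author notes after the statement of Theorem \ref{main_thm:2}, such an improvement is not available unconditionally.
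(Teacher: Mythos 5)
Your proposal is correct and follows essentially the same route as the paper: split off the scales $n < t$, bound the main part via Theorem \ref{thm:3}, bound the error via Corollary \ref{cor:3} plus Chebyshev in $\ell^2$, and optimize with $t \simeq \log^2(e/\lambda)$. The only (harmless) deviation is in the range $n < t$, where you sum the $\ell^1$-contractions and apply Chebyshev, while the paper instead dominates $\calM_{2^n} f \leq C n M_{2^n} f$ and invokes the Hardy--Littlewood maximal theorem; both yield the same $\lambda^{-1} t \abs{F}$ bound.
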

\begin{proof}
	We start by proving the following statement.
	\begin{claim}
		\label{clm:4}
		There are $C, c > 0$ such that for each $t > 0$, there are two sequences of operators
		$(A_n^t : n \in \NN)$ and $(B_n^t : n \in \NN)$ such that $\calM_{2^n} = A_n^t + B_n^t$, and
		for any finitely supported function $f: \ZZ \rightarrow \CC$,
		\begin{equation}
			\label{eq:23a}
			\sup_{\lambda > 0}{
			\lambda \cdot 
			\Big|\Big\{x \in \ZZ : 
			\sup_{n \in \NN} \big|A_n^t f(x) \big| > \lambda \Big\}\Big|}
			\leq C t \|f\|_{\ell^1},
		\end{equation}
		and
		\begin{equation}
			\label{eq:23b}
			\Big\|
			\sup_{n \in \NN}
			\big|
			B_n^t f
			\big|
			\Big\|_{\ell^2}
			\leq
			C \exp\big(- c \sqrt{t}\big) \|f\|_{\ell^2}.
		\end{equation}
	\end{claim}
	Without loss of generality, we may assume that $f$ is non-negative finitely supported function on $\ZZ$. 
	For $1 \leq n < t$, we set
	\[
		A_n^t f = \calM_{2^n} f,
		\qquad\text{and}\qquad
		B_n^t f \equiv 0.
	\]
	Since by the prime number theorem,
	\[
		\frac{2^n}{C} \leq \vartheta(2^n),
	\]
	we have
	\[
		\calM_{2^n} f(x) \leq C n M_{2^n} f (x).
	\]
	Hence, by the Hardy--Littlewood theorem,
	\begin{align*}
		\Big|\Big\{x \in \ZZ : 
		\sup_{1 \leq n < t} \calM_{2^n} f(x)  > \lambda
		\Big\}\Big|
		& \leq
		\Big|\Big\{x \in \ZZ : 
	    \sup_{1 \leq n < t} M_{2^n} f(x) > \frac{\lambda}{C t}
	    \Big\}\Big| \\
		&\lesssim
		\lambda^{-1} t \|f\|_{\ell^1}.
	\end{align*}
	For $t \leq n$, we set
	\[
		A_n^t f = \calF^{-1}\big(\Pi_n^t \hat{f} \big), \qquad\text{and}\qquad
		B_n^t f = \calM_{2^n} f - A_n^t f.
	\]
	In view of Corollary \ref{cor:3} and Theorem \ref{thm:3}, we obtain \eqref{eq:23b} and \eqref{eq:23a},
	respectively, and the claim follows.

	Now, the theorem is an easy consequence of Claim \ref{clm:4}. Indeed, given a subset $F \subset \ZZ$
	of a finite cardinality, for any $t > 0$, we can write
	\begin{align*}
		\Big|
        \Big\{
        \sup_{n \in \NN} \calM_{2^n} (\ind{F}) > \lambda
        \Big\}
        \Big|
		&\lesssim
		\Big|
        \Big\{
        \sup_{n \in \NN} \big| A_n^t (\ind{F})\big| > \tfrac{1}{2} \lambda
        \Big\}
        \Big|
		+
		\Big|
        \Big\{
        \sup_{n \in \NN} \big| B_n^t (\ind{F})\big| > \tfrac{1}{2} \lambda
        \Big\}
        \Big| \\
		&\lesssim
		\lambda^{-1} t \abs{F} + \lambda^{-2} \exp\big(-2 c \sqrt{t} \big) \abs{F}.
	\end{align*}
	Thus, taking 
	\[
		t = (2c)^{-2} \log^2 (e/\lambda),
	\]
	we get the desired conclusion.
\end{proof}

In view of \eqref{eq:39}, Theorem \ref{thm:5} entails the following corollary, which is precisely Theorem \ref{main_thm:2}.
\begin{corollary}
	\label{cor:2}
    There is $C > 0$ such that for any subset $F \subset \ZZ$ of a finite cardinality and all $0 < \lambda < 1$,
	\[
        \Big|
        \Big\{x \in \ZZ:
        \sup_{n \in \NN} \calA_{2^n} (\ind{F})(x) > \lambda
        \Big\}
        \Big|
        \leq
        C
        \lambda^{-1} \log^2\big(e/ \lambda\big) \abs{F}.
	\]
\end{corollary}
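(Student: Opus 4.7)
The plan is to combine Theorem \ref{thm:5} with the pointwise bound \eqref{eq:39} derived earlier; no new analytic machinery is needed. The reduction passes through two elementary domination steps.

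First, applying \eqref{eq:39} with $f = \ind{F}$ and $N = 2^n$ gives
\[
\sup_{n \in \NN} \calA_{2^n}(\ind{F})(x) \leq \sup_{N' \in \NN} \calM_{N'}(\ind{F})(x).
\]
The right-hand side, however, runs over all natural numbers, while Theorem \ref{thm:5} controls only the dyadic supremum of $\calM_{2^n}$. So I would next compare the full supremum of the weighted averages to the dyadic one. Since $\ind{F}$ is non-negative, for any $N' \geq 2$ with $2^n \leq N' < 2^{n+1}$,
\[
\calM_{N'}(\ind{F})(x) = \frac{1}{\vartheta(N')}\sum_{p \in \PP_{N'}} \ind{F}(x+p) \log p \leq \frac{\vartheta(2^{n+1})}{\vartheta(N')} \calM_{2^{n+1}}(\ind{F})(x).
\]
By the prime number theorem, $\vartheta(M) \simeq M$ for $M$ large, so $\vartheta(2^{n+1})/\vartheta(N')$ is uniformly bounded by an absolute constant $C$ once $N'$ exceeds some threshold $N_0$; the finitely many remaining values of $N'$ contribute an additional constant factor. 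Consequently,
\[
\sup_{N' \in \NN} \calM_{N'}(\ind{F})(x) \leq C \sup_{n \in \NN} \calM_{2^n}(\ind{F})(x).
\]

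Combining these two inequalities yields the inclusion
\[
\Big\{x \in \ZZ : \sup_{n \in \NN} \calA_{2^n}(\ind{F})(x) > \lambda\Big\}
\subset
\Big\{x \in \ZZ : \sup_{n \in \NN} \calM_{2^n}(\ind{F})(x) > \lambda/C\Big\}.
\]
Applying Theorem \ref{thm:5} with threshold $\lambda/C$ bounds the right-hand measure by
\[
\frac{C'}{\lambda/C}\log^2\!\big(eC/\lambda\big)\abs{F},
\]
and since $\log^2(eC/\lambda) \lesssim \log^2(e/\lambda)$ uniformly in $0 < \lambda < 1$, we obtain the claimed estimate after adjusting the constant.

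No step here is the main obstacle; all the serious work was done in Theorem \ref{thm:5}. The only point requiring a moment's care is that the pointwise bound \eqref{eq:39} produces a supremum over all integers $N'$, so one must invoke non-negativity of $\ind{F}$ together with the prime number theorem to replace it by the dyadic supremum before Theorem \ref{thm:5} can be applied.
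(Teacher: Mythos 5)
Your proof is correct and follows exactly the route the paper intends: the paper derives the corollary in one line from \eqref{eq:39} and Theorem \ref{thm:5}, leaving implicit the reduction of $\sup_{N'}\calM_{N'}$ to the dyadic supremum, which you correctly supply via non-negativity of $\ind{F}$ and the bound $\vartheta(2^{n+1})/\vartheta(N')\lesssim 1$ for $2^n\le N'<2^{n+1}$.
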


\section{Applications}
\label{sec:6}
In this section we show two applications of Theorem \ref{thm:5} and Corollary \ref{cor:2}. First, we prove that the
restricted weak Orlicz estimates together with strong $\ell^2$ bounds are sufficient to get $\ell^p$ maximal inequalities
for all $1 < p \leq 2$. Next, we conclude almost everywhere convergence of ergodic averages for functions in some Orlicz 
space close to $L^1$.
\subsection{$\ell^p$ theory}
\begin{theorem}
	\label{thm:8}
	For each $p \in (1, 2]$ there is $C > 0$ such that for any function $f \in \ell^p(\ZZ)$,
	\[
		\Big\|
		\sup_{N \in \NN} \big| \calM_N f \big|
		\Big\|_{\ell^p}
		\leq
		C (p-1)^{-4} \|f\|_{\ell^p}.
	\]
\end{theorem}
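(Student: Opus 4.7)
The plan is to deduce Theorem \ref{thm:8} from Theorem \ref{thm:5} alone, via a Marcinkiewicz-type interpolation in which the usual $\ell^1$ endpoint is replaced by the Orlicz estimate with $\log^2(e/\lambda)$ loss. First I would reduce to the dyadic supremum and to nonnegative $f$: monotonicity of $\vartheta(N) \calM_N |f|(x) = \sum_{p \in \PP_N} |f(x+p)| \log p$ in $N$, combined with the prime number theorem bound $\vartheta(2^{n+1})/\vartheta(N) \leq C$ for $N \in [2^n, 2^{n+1})$, yields $\sup_N |\calM_N f(x)| \leq C \sup_n \calM_{2^n} |f|(x)$, so it suffices to bound $\sup_n \calM_{2^n} f$ in $\ell^p$ for $f \geq 0$.

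For fixed $\lambda > 0$, split $f = g_\lambda + b_\lambda$ with $g_\lambda = f \cdot \ind{f \leq \lambda}$. Because $\calM_{2^n}$ averages values bounded by $\lambda$, one has $\sup_n \calM_{2^n} g_\lambda \leq \lambda$ pointwise, so $\{\sup_n \calM_{2^n} f > 2\lambda\} \subset \{\sup_n \calM_{2^n} b_\lambda > \lambda\}$. Next I would decompose $b_\lambda$ into dyadic level sets $S_k(\lambda) = \{2^k \lambda < f \leq 2^{k+1} \lambda\}$ for $k \in \NN_0$, each of cardinality at most $(2^k \lambda)^{-p} \|f\|_{\ell^p}^p$. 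Sublinearity and the bound $f \leq 2^{k+1}\lambda$ on $S_k(\lambda)$ give $\sup_n \calM_{2^n} b_\lambda \leq \sum_k 2^{k+1} \lambda \sup_n \calM_{2^n} \ind{S_k(\lambda)}$, and for positive weights $\alpha_k$ with $\sum_k \alpha_k = 1$ the weighted union bound produces
\[
	\big|\big\{\sup_n \calM_{2^n} b_\lambda > \lambda\big\}\big|
	\leq
	\sum_{k \geq 0} \big|\big\{\sup_n \calM_{2^n} \ind{S_k(\lambda)} > \alpha_k 2^{-k-1}\big\}\big|
	\leq
	C \sum_{k \geq 0} \frac{2^{k+1}}{\alpha_k} \log^2\!\big(\tfrac{2e\, 2^k}{\alpha_k}\big) |S_k(\lambda)|,
\]
the last step invoking Theorem \ref{thm:5} (the threshold $\alpha_k 2^{-k-1}$ stays below $1$ for the choice below).

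It remains to integrate against $p \lambda^{p-1} d\lambda$. Fubini gives $\int_0^\infty \lambda^{p-1} |S_k(\lambda)| d\lambda = \tfrac{1 - 2^{-p}}{p} 2^{-kp} \|f\|_{\ell^p}^p$, and so
\[
	\big\|\sup_n \calM_{2^n} f\big\|_{\ell^p}^p \leq C \sum_{k \geq 0} \frac{2^{k(1-p)}}{\alpha_k} \bigl(1 + k + \log(1/\alpha_k)\bigr)^2 \|f\|_{\ell^p}^p.
\]
I would then choose $\alpha_k = c_p 2^{-k(p-1)/2}$ with $c_p = 1 - 2^{-(p-1)/2} \simeq p-1$, making $\sum_k \alpha_k = 1$ and $\alpha_k^{-1} 2^{k(1-p)} = c_p^{-1} 2^{-k(p-1)/2}$. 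The identities $\sum_k k^2 2^{-k(p-1)/2} \simeq (p-1)^{-3}$ and $\log^2(1/c_p) = o((p-1)^{-2})$ then close the estimate as $\|\sup_n \calM_{2^n} f\|_{\ell^p}^p \lesssim c_p^{-1}\cdot (p-1)^{-3} \|f\|_{\ell^p}^p \lesssim (p-1)^{-4} \|f\|_{\ell^p}^p$, giving the stated bound for $p \in (1, 2]$.

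The main obstacle will be keeping the logarithmic loss from Theorem \ref{thm:5} compatible with the geometric summation over dyadic levels of $f$: the factor $\log^2(e/\lambda)$ translates through the expansion into $k^2$, forcing the identity $\sum k^2 x^k \simeq (1-x)^{-3}$, which together with the normalization $c_p^{-1} \simeq (p-1)^{-1}$ accounts precisely for the quartic blow-up $(p-1)^{-4}$.
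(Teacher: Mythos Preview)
Your argument is correct and takes a genuinely different route from the paper's. The paper first converts Theorem~\ref{thm:5} into a restricted weak-type $(p_0,p_0)$ bound for characteristic functions (Claim~3), then invokes the full $\ell^2$ theory (Theorems~\ref{thm:1} and~\ref{thm:2}) to get a strong $\ell^2$ endpoint, upgrades the restricted weak-type to weak-type via normability of $\ell^{p_0,\infty}$, and finally applies Marcinkiewicz interpolation between $p_0=(1+p)/2$ and $2$. Your argument, by contrast, is a direct extrapolation from Theorem~\ref{thm:5} alone: the layer-cake decomposition with geometric weights $\alpha_k$ replaces interpolation, and the identity $\sum_k (1+k)^2 x^k \simeq (1-x)^{-3}$ absorbs the $\log^2$ loss. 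What you gain is self-containment---you never touch the $\ell^2$ machinery---and in fact you obtain the slightly sharper constant $(p-1)^{-4/p}$ before relaxing it to $(p-1)^{-4}$. What the paper's approach buys is modularity: once one has any strong endpoint and any weak endpoint, the interpolation theorem does the bookkeeping automatically, without the need to track the combinatorics of the level-set sum by hand.
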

\begin{proof}
	With loss of generality, we may restrict the supremum to dyadic numbers. We claim the following holds true.
	\begin{claim}
		\label{clm:3}
		There is $C > 0$ such that for any subset $F \subset \ZZ$ of finite cardinality, and any $p_0 \in (1, \infty)$,
		\[
       		\sup_{\lambda > 0}
        	\lambda
			\cdot
       		\Big|
        	\Big\{x \in \ZZ : 
        	\sup_{n \in \NN} \calM_{2^n} (\ind{F})(x) > \lambda
        	\Big\}
        	\Big|^{\frac{1}{p_0}}
        	\leq
       		C
        	(p_0-1)^{-\frac{2}{p_0}} \abs{F}^{\frac{1}{p_0}}.
		\]
	\end{claim}
	Since $\calM_N$ are averaging operators, we may assume that $0 < \lambda < 1$. Observe that the function
	\[
		(0, 1) \ni \lambda \mapsto \lambda^{p_0-1} \log^2(e/\lambda)
	\]
	attains its maximum at
	\[
		\lambda = \exp\bigg(1-\frac{2}{p_0-1}\bigg).
	\]
	The maximal value equals $4 e^{p_0-3} (p_0-1)^{-2}$, thus
	\[
		\lambda^{-1} \log^2(e/\lambda) \leq 4 e^{p_0-3} (p_0-1)^{-2} \lambda^{-p_0}.
	\]
	Hence, by Theorem \ref{thm:5}, we get
	\begin{align*}
		\Big|
        \Big\{x \in \ZZ : 
        \sup_{n \in \NN} \calM_{2^n} (\ind{F})(x) > \lambda
        \Big\}
        \Big|
		&\leq
		C \lambda^{-1} \log^2(e/\lambda) \abs{F} \\
		&\leq 4 C e^{p_0-3} (p_0-1)^{-2} \lambda^{-p_0} \abs{F},
	\end{align*}
	which is what we claimed. 

	Next, we notice that by Theorem \ref{thm:1} and Theorem \ref{thm:2}, we have
	\begin{equation}
		\label{eq:31}
		\Big\|\sup_{n \in \NN} \big| \calM_{2^n} f \big| \Big\|_{\ell^2} \leq C \|f\|_{\ell^2}.
	\end{equation}
	Let us consider $p \in (1, 2)$. Set $p_0 = (1+p)/2$. Since $p_0 > 1$, the weak $\ell^{p_0}(\ZZ)$ is normable
	(see \cite{hunt}), thus at the cost of the additional factor of $(p-1)^{-1}$, we get
	\begin{equation}
		\label{eq:32}
		\sup_{\lambda > 0}
        \lambda
		\cdot
        \Big|
        \Big\{x \in \ZZ : 
        \sup_{n \in \NN} \big| \calM_{2^n} f (x) \big| > \lambda
        \Big\}
        \Big|^{\frac{1}{p}}
		\leq
		C (p-1)^{-1-\frac{2}{p}} \|f\|_{\ell^{p, 1}}
	\end{equation}
	for any $f \in \ell^{p,1}(\ZZ)$. Now, by the Marcinkiewicz interpolation theorem, \cite[Theorem 11.9]{adr}, based on 
	\eqref{eq:31} and \eqref{eq:32} we obtain 
	\[
		\Big\|\sup_{n \in \NN} \big| \calM_{2^n} f \big| \Big\|_{\ell^p} 
		\leq
		C
		\frac{p(2-p_0)}{(p-p_0)(2-p)} (p-1)^{-\frac{p+2}{p} \theta} \|f\|_{\ell^p}
	\]
	where $\theta \in (0, 1)$ satisfies
	\[
		\frac{1}{p} = \frac{\theta}{p_0} + \frac{1-\theta}{2}.
	\]
	Since 
	\[
		\frac{p(2-p_0)}{(p-p_0)(2-p)} (p-1)^{-\frac{p+2}{p} \theta} 
		= \frac{p(3-p)}{(p-1)(2-p)} (p-1)^{-\frac{p+2}{p} \cdot \frac{p+2-p^2}{p(3-p)}}
		\lesssim (p-1)^{-4},
	\]
	the theorem follows.
\end{proof}

\subsection{Pointwise convergence}
Let $(X, \calB, \mu)$ be a probability space with a measurable and measure preserving transformation $T: X \rightarrow X$.
We consider the following averages
\[
	\scrA_N f(x) = \frac{1}{\pi(N)} \sum_{p \in \PP_N} f\big(T^p x\big), \qquad x \in X.
\]
With a help of the Calder\'on transference principle from \cite{cald} applied to Corollary \ref{cor:2}, we deduce the
following proposition.
\begin{proposition}
	\label{prop:2}
	There is $C > 0$ such that for any subset $A \in \calB$, and all $0 < \lambda < 1$,
	\[
		\mu\Big\{x \in X : \sup_{N \in \NN} \scrA_N \big(\ind{A}\big)(x) > \lambda\Big\}
		\leq
		C \lambda^{-1} \log^2\big(e/\lambda\big) \mu(A).
	\]
\end{proposition}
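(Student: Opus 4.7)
The plan is a standard Calder\'on transference argument, with Corollary \ref{cor:2} as the input. First I would reduce to a finite supremum: fix $M \in \NN$ and show the bound for $\sup_{1 \leq N \leq M} \scrA_N(\ind{A})$, so that the map $x \mapsto \sup_{1 \leq N \leq M} \scrA_N(\ind{A})(x)$ is measurable and the right-hand side is finite; the full statement then follows by monotone convergence as $M \to \infty$.

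Next, fix a large integer $K$ and, for each $x \in X$, define the finitely supported auxiliary function
\[
    f_x(n) = \ind{A}\bigl(T^n x\bigr) \cdot \ind{[0, K+M] \cap \NN}(n), \qquad n \in \ZZ,
\]
whose support is $F_x = \{0 \leq n \leq K+M : T^n x \in A\}$. For $0 \leq n \leq K$ and $1 \leq N \leq M$, every index $n+p$ with $p \in \PP_N$ lies in $[0, K+M]$, so
\[
    \scrA_N(\ind{A})\bigl(T^n x\bigr) = \frac{1}{\pi(N)} \sum_{p \in \PP_N} \ind{A}\bigl(T^{n+p} x\bigr) = \calA_N(f_x)(n).
\]
Consequently the set $\bigl\{n \in [0, K] : \sup_{1 \leq N \leq M} \scrA_N(\ind{A})(T^n x) > \lambda \bigr\}$ is contained in $\bigl\{n \in \ZZ : \sup_{N \in \NN} \calA_N(\ind{F_x})(n) > \lambda\bigr\}$, and Corollary \ref{cor:2} applied pointwise in $x$ gives
\[
    \Bigl| \bigl\{ n \in [0, K] : \sup_{1 \leq N \leq M} \scrA_N(\ind{A})(T^n x) > \lambda \bigr\} \Bigr|
    \leq C \lambda^{-1} \log^2(e/\lambda) \, |F_x|.
\]

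Now I integrate this inequality over $X$ and invoke the invariance $\mu \circ T^{-1} = \mu$. The left side becomes, by Fubini,
\[
    \sum_{n=0}^{K} \mu\bigl\{ x : \sup_{1 \leq N \leq M} \scrA_N(\ind{A})(T^n x) > \lambda \bigr\}
    = (K+1)\, \mu\bigl\{ x : \sup_{1 \leq N \leq M} \scrA_N(\ind{A})(x) > \lambda \bigr\},
\]
while $\int_X |F_x| \, d\mu(x) = (K+M+1) \mu(A)$. Dividing by $K+1$ and letting $K \to \infty$ absorbs the harmless $M$-dependent boundary factor, yielding
\[
    \mu\bigl\{ x : \sup_{1 \leq N \leq M} \scrA_N(\ind{A})(x) > \lambda \bigr\}
    \leq C \lambda^{-1} \log^2(e/\lambda) \, \mu(A),
\]
and finally $M \to \infty$ gives the proposition. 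There is no real obstacle here; the only point requiring mild care is the ordering of the two limits (first $K$, then $M$), which is why I restrict the supremum to $N \leq M$ at the outset, so that the buffer $[K, K+M]$ has width controlled by $M$ and disappears in the ratio $(K+M+1)/(K+1) \to 1$.
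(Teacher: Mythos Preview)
Your proposal is correct and follows essentially the same Calder\'on transference argument as the paper: define the auxiliary indicator on a finite window of the orbit, match $\scrA_N(\ind{A})(T^n x)$ with $\calA_N(\ind{F_x})(n)$ on the inner range, apply Corollary~\ref{cor:2} pointwise, integrate using measure preservation, and pass to the limit first in the window length and then in the truncation of the supremum. The only cosmetic difference is the indexing of the buffer (your $[0,K+M]$ with inner range $[0,K]$ versus the paper's $[0,R]$ with inner range $[0,R-L]$), which is immaterial.
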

\begin{proof}
	Fix $A \in \calB$ and $x \in X$. For $R > L > 0$, we define a finite subset of $F \subset \ZZ$ by setting
	\[
		F = \big\{0 \leq n \leq R : T^n x \in A \big\}. 
	\]
	Then for $0 \leq n \leq R - N$, $N \leq L$,
	\begin{align*}
		\scrA_N \big(\ind{A}\big)\big(T^n x\big) 
		&= \frac{1}{\pi(N)} \sum_{p \in \PP_N} \ind{A}\big(T^{n+p} x\big) \\
		&= \frac{1}{\pi(N)} \sum_{p \in \PP_N} \ind{F}(n + p)
		=
		\calA_N \big(\ind{F}\big)(n).
	\end{align*}
	Hence,
	\[
		\Big|\Big\{
		0 \leq n \leq R-L :
		\max_{1 \leq N \leq L}
		\scrA_N \big(\ind{A}\big)\big(T^n x\big) > \lambda
		\Big\}
		\Big|
		\leq
		\Big|\Big\{
		n \in \ZZ : \max_{1 \leq N \leq L} \calA_N \big(\ind{F}\big)(n) > \lambda
		\Big\}\Big|.
	\]
	By Corollary \ref{cor:2},
	\begin{align*}
		\Big|\Big\{
        n \in \ZZ : \max_{1 \leq N \leq L} \calA_N \big(\ind{F}\big)(n) > \lambda
        \Big\}\Big|
		&\leq
		C \lambda^{-1} \log^2(e/\lambda)
		\sum_{n \in \ZZ} \ind{F}(n) \\
		&=
		C \lambda^{-1} \log^2(e/\lambda) \sum_{n = 0}^R \ind{A}\big(T^n x\big).
	\end{align*}
	Since $T$ preserves the measure $\mu$, by integrating with respect to $x \in X$ we obtain
	\begin{align*}
		&
		(R-L+1) \cdot \mu\Big(x \in X : \max_{1 \leq N \leq L}
        \scrA_N \big(\ind{A}\big)(x) > \lambda \Big) \\
		&\qquad\qquad=
		\sum_{n = 0}^{R-L} \mu\Big(x \in X : \max_{1 \leq N \leq L}
        \scrA_N \big(\ind{A}\big)\big(T^n x\big) > \lambda \Big) \\
		&\qquad\qquad=
		\int_X \Big|\Big\{
		0 \leq n \leq R-L :
        \max_{1 \leq N \leq L}
        \scrA_N \big(\ind{A}\big)\big(T^n x\big) > \lambda
        \Big\}
        \Big| {\: \rm d}\mu(x) \\
		&\qquad\qquad\leq
		C \lambda^{-1} \log^2(e/\lambda) 
		\sum_{n = 0}^R \int_X \ind{A}\big(T^n x\big) {\: \rm d} \mu(x) \\
		&\qquad\qquad= C (R+1) \lambda^{-1} \log^2(e/\lambda) \mu(A).
	\end{align*}
	We now divide by $R$ and take $R$ approaching infinity to get
	\[
		\mu\Big(x \in X : \max_{1 \leq N \leq L}
        \scrA_N \big(\ind{A}\big)\big(T^n x\big) > \lambda \Big)
		\leq
		C \lambda^{-1} \log^2(e/\lambda) \mu(A).
	\]
	Finally, taking $L$ tending to infinity by the monotone convergence theorem we conclude the proof.
\end{proof}
We are now in the position to show $\mu$-almost everywhere convergence of the ergodic averages $(\scrA_N f : N)$ for 
a function $f$ from the Orlicz space $L(\log L)^2(\log \log L)(X, \mu)$. Let us recall that
$L(\log L)^2(\log \log L)(X, \mu)$ consists of functions such that
\[
	\int_X \abs{f(x)} \big(\log^+ \abs{f(x)} \big)^2 \big(\log^+\log^+ \abs{f(x)}\big) {\: \rm d} \mu(x) < \infty
\]
where $\log^+ t = \max\{0, \log t\}$. The space $L(\log L)^2(\log \log L)(X, \mu)$ is a Banach space with the norm
\[
	\big\|f\big\|_{L(\log L)^2(\log \log L)} =
	\int_0^1 f^*(t)
	\phi\big(t^{-1} \big) 
	{\: \rm d} t
\]
where $f^*$ is the decreasing rearrangement of $f$, that is
\[
	f^*(t) = \inf\Big\{s > 0 : \mu\big\{x \in X : \abs{f(x)} \geq s\big\} \leq t \Big\},
\]
and
\[
	\phi(t) = \log^2 (1 + t) \log \big(1 + \log t\big).
\]
\begin{theorem}
	\label{thm:4}
	There is $C > 0$ such that for each $f \in L(\log L)^2(\log \log L)(X, \mu)$, 
	\[
		\sup_{\lambda > 0}{\lambda \cdot
		\mu\Big\{x \in X : \sup_{N \in \NN} \big| \scrA_N f(x)\big| > \lambda\Big\}}
		\leq
		C
		\big\|f \big\|_{L(\log L)^2(\log \log L)}.
	\]
	In particular, for each $f \in L(\log L)^2(\log \log L)(X, \mu)$,
	\[
		\text{the limit} \quad
		\lim_{N \to \infty} \scrA_N f(x)
		\quad
		\text{exists}
	\]
	for $\mu$-almost all $x \in X$.
\end{theorem}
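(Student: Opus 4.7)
The plan is to prove the theorem in two steps. First, I would establish the weak maximal inequality for $\scrA^* f := \sup_{N \in \NN} \abs{\scrA_N f}$ on the Orlicz space by promoting the restricted weak estimate of Proposition \ref{prop:2} to arbitrary non-negative $f$ via a dyadic level-set decomposition. Second, I would deduce the $\mu$-a.e.\ convergence by invoking the Banach principle together with Bourgain's $L^2$ convergence theorem \cite{bou1}.

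For the first step, I reduce to $f \geq 0$, fix $\lambda > 0$, and split $f = g + h$ with $g = f \wedge (\lambda/2)$ and $h = (f - \lambda/2)^+$; since $\scrA_N g \leq \lambda/2$ pointwise, $\{\scrA^* f > \lambda\} \subseteq \{\scrA^* h > \lambda/2\}$. Decomposing $h \leq \sum_{k \geq k_0} 2^{k+1} \ind{E_k}$, where $E_k = \{2^k \leq f < 2^{k+1}\}$ and $2^{k_0} \simeq \lambda$, the sublinearity of $\scrA^*$ gives, for any partition $\sum_{k \geq k_0} \lambda_k = \lambda/2$ with $\lambda_k \in (0, 2^{k+1})$,
\[
	\mu\big\{\scrA^* f > \lambda\big\}
	\leq
	\sum_{k \geq k_0} \mu\big\{\scrA^*(\ind{E_k}) > \lambda_k/2^{k+1}\big\}
	\leq
	C \sum_{k \geq k_0} \frac{2^{k+1}}{\lambda_k} \log^2\!\Big(\frac{e \cdot 2^{k+1}}{\lambda_k}\Big) \mu(E_k),
\]
the last inequality being Proposition \ref{prop:2}. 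The main task is to choose the weights $\lambda_k$ so that the right-hand side is dominated by $C \lambda^{-1} \|f\|_{L(\log L)^2(\log\log L)}$. Writing the Orlicz norm through the decreasing rearrangement $f^*$, one identifies the relevant weight as the tail probability $b_k = \mu\{f \geq 2^k\}$, so that
\[
	\|f\|_{L(\log L)^2(\log\log L)} \simeq \sum_k 2^k \mu(E_k) \phi(1/b_k),
	\qquad \phi(t) = \log^2(1+t) \log(1+\log t).
\]
This suggests picking $\lambda_k$ proportional to $2^{k+1} b_k$, suitably renormalised so that $\sum_k \lambda_k = \lambda/2$; with this choice $\log(2^{k+1}/\lambda_k) \simeq \log(1/b_k)$ up to an absolute constant, and a careful summation produces the extra $\log\log$ factor needed to match the Orlicz norm.

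For the second step, I use that on a probability space $L^\infty \subseteq L^2 \subseteq L(\log L)^2(\log\log L)$, and since bounded simple functions are dense in the Orlicz space, so is $L^2$. Bourgain's theorem \cite{bou1} gives $\mu$-a.e.\ convergence of $\scrA_N f$ for every $f \in L^2$. Given $f \in L(\log L)^2(\log\log L)$ and $\varepsilon > 0$, choose $g \in L^2$ with $\|f - g\|_{L(\log L)^2(\log\log L)} < \varepsilon$; then, since $(\scrA_N g)$ converges a.e.,
\[
	\limsup_{N, M \to \infty} \abs{\scrA_N f - \scrA_M f} \leq 2\,\scrA^*(f - g) \quad \mu\text{-a.e.}
\]
The weak maximal inequality yields $\mu\{\limsup_{N, M} \abs{\scrA_N f - \scrA_M f} > \eta\} \leq 2 C \varepsilon / \eta$ for each $\eta > 0$, so letting $\varepsilon \to 0$ shows the $\limsup$ vanishes a.e., and the sequence $(\scrA_N f(x))_{N \in \NN}$ is Cauchy for $\mu$-a.e.\ $x$.

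The main obstacle is calibrating the partition $\{\lambda_k\}$ in the first step: a naive polynomial choice $\lambda_k \simeq \lambda/(k-k_0)^2$ produces an additional $\log$ factor, yielding only an $L(\log L)^3 \to L^{1,\infty}$ bound. Extracting the sharp $\log\log L$ loss requires exploiting the rearrangement structure of the Orlicz norm through the tail probabilities $b_k$, rather than the coarser Luxemburg-type expression $\int \abs{f} \log^2(e+\abs{f}) \log\log(e+\abs{f}) d\mu$, which is strictly larger when $f$ is highly concentrated.
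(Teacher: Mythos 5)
Your second step (a.e.\ convergence from the maximal inequality plus density of $L^2$ in the Orlicz space and Bourgain's theorem) is standard and matches what the paper does implicitly. The gap is in your first step, which is the heart of the theorem. After reducing to $h \leq \sum_k 2^{k+1}\ind{E_k}$ you sum the restricted weak estimates via the union bound $\mu\{\sum_k h_k > \lambda/2\} \leq \sum_k \mu\{h_k > \lambda_k\}$ with $\sum_k \lambda_k = \lambda/2$, and everything hinges on choosing the $\lambda_k$. Your proposed calibration $\lambda_k \propto 2^{k+1} b_k$ does not work: with $S = \sum_j 2^{j+1} b_j$ one gets $2^{k+1}/\lambda_k = 2S/(\lambda b_k)$, so your bound becomes $\tfrac{2S}{\lambda} \sum_k \tfrac{\mu(E_k)}{b_k} \log^2\big(2eS/(\lambda b_k)\big)$, and already for $\mu(E_k) \simeq 4^{-k}$ (so that $\mu(E_k)/b_k \simeq 1$) the sum over $k$ diverges while $\|f\|_{L(\log L)^2(\log\log L)} \simeq \sum_k 2^{-k} k^2 \log k$ is finite. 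More fundamentally, \emph{no} choice of thresholds closes this step: Chebyshev on each piece forces $\theta_k = 2\lambda_k/\lambda \gtrsim \log^2(2^{k+1}e/\lambda_k)/w_k \gtrsim k^2/w_k$, where $w_k$ is the Orlicz weight per unit mass at height $2^k$, and the constraint $\sum_k \theta_k \leq 1$ then requires $\sum_k k^2/w_k < \infty$ — which fails for $w_k \simeq k^2 \log k$. So the union-bound splitting can only reach an $L(\log L)^3$-type space; the extra logarithm you flag for the "naive polynomial choice" is intrinsic to the method, not to the calibration.

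The paper circumvents this with two devices. First, it upgrades Proposition \ref{prop:2} to a bound on the $L^{1,\infty}$ \emph{norm} of $\sup_N \scrA_N(\ind{A})$ purely in terms of $\mu(A)$, namely $\sup_{\lambda>0} \lambda \, \mu\{\sup_N \scrA_N(\ind{A}) > \lambda\} \lesssim \mu(A)\log^2(e/\mu(A))$, by the trivial case analysis $\lambda \gtrless \mu(A)$. Second, it decomposes $f$ along the decreasing rearrangement into pieces $a_j \ind{A_j}$ with $\mu(A_j) = 2^{-j}$ and invokes the log-convexity of $L^{1,\infty}$ (Stein--Weiss, Kalton), which yields $\|\sum_j g_j\|_{L^{1,\infty}} \lesssim \sum_j \log(j+1)\, \|g_j\|_{L^{1,\infty}}$; since $\log(j+1) \simeq \log(1+\log(1/\mu(A_j)))$, the loss is exactly the $\log\log L$ in the Orlicz norm. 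Log-convexity is a genuinely stronger summation principle than the union bound over split thresholds (its proof truncates each summand and controls the $L^1$ norms of the truncations), and without it — or an equivalent truncation argument — your first step cannot be completed.
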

\begin{proof}
	We first prove the following claim.
	\begin{claim}
		\label{clm:2}
		There is $C > 0$ such that for each $A \in \calB$, and any $0 < \lambda < 1$,
		\begin{equation}
			\label{eq:17}
			\sup_{\lambda > 0}{
			\lambda \cdot \mu\Big\{x \in X : \sup_{N \in \NN} \scrA_N \big(\ind{A}\big)(x) > \lambda\Big\}}
			\leq
			C \mu(A) \log^2\bigg(\frac{e}{\mu(A)}\bigg).
		\end{equation}
	\end{claim}
	Indeed, by monotonicity, if $\lambda \geq \mu(A)$, then
	\begin{equation}
		\label{eq:18}
		\lambda^{-1} \mu(A) \log^2\bigg(\frac{e}{\lambda}\bigg) 
		\leq \lambda^{-1} \mu(A) \log^2\bigg(\frac{e}{\mu(A)} \bigg).
	\end{equation}
	Otherwise, $\lambda \leq \mu(A)$, which entails that
	\begin{equation}
		\label{eq:19}
		1 \leq \lambda^{-1} \mu(A) \leq \lambda^{-1} \mu(A) \log^2\bigg(\frac{e}{\mu(A)}\bigg).
	\end{equation}
	In view of Proposition \ref{prop:2},
	\[
		\mu\Big\{x \in X : \sup_{N \in \NN} \scrA_N \big(\ind{A}\big)(x) > \lambda\Big\}
		\leq
		\min\Big\{1, C \mu(A) \lambda^{-1} \log^2(e/\lambda)\Big\},
	\]
	which together with \eqref{eq:18} and \eqref{eq:19} easily lead to \eqref{eq:17}.

	Now, to show the theorem, let us fix $f \in L(\log L)^2 (\log \log L)(X, \mu)$. We set
	\[
		A_j = \Big\{x \in X : f^*(2^{-j+1}) < \abs{f(x)} \leq f^*(2^{-j}) \Big\},
	\]
	and
	\[
		a_j = f^*(2^{-j}).
	\]
	Since $\abs{f(x)} \leq a_j$ for $x \in A_j$, we have
	\[
		\abs{f} \leq \sum_{j \geq 1} a_j \ind{A_j}.
	\]
	Moreover, if $j > k$ then for $x \in A_j$ and $y \in A_k$, we have $\abs{f(x)} \geq \abs{f(y)}$. Since
	$\mu(A_j) = 2^{-j}$, we get
	\begin{equation}
		\label{eq:20}
		f^*(t) \geq \sum_{j \geq 1} a_j \ind{[2^{-j-1}, 2^{-j})}(t).
	\end{equation}
	Because the space $L^{1, \infty}(X, \mu)$ is log-convex (see \cite{kal, stwa}), by Claim \ref{clm:2}, we get
	\begin{align}
		\nonumber
		\sup_{\lambda > 0}{\lambda \cdot
		\mu\Big\{x \in X : \sup_{N \in \NN} \big| \scrA_N f(x)\big| > \lambda\Big\}}
		&\lesssim
		\sum_{j \geq 1} \log (j+1)
		\sup_{\lambda > 0}{\lambda \cdot
		\mu\Big\{x \in X : a_j \sup_{N \in \NN} \scrA_N \big(\ind{A_j}\big)(x) > \lambda\Big\}} \\
		\label{eq:21}
		&\lesssim
		\sum_{j \geq 1} \log (j+1) a_j \mu(A_j) \log^2\bigg(\frac{e}{\mu(A_j)}\bigg).
	\end{align}
	On the other hand, by \eqref{eq:20} we have  
	\begin{align*}
		\big\|f \big\|_{L(\log L)^2(\log \log L)} =
		\int_0^1 f^*(t) \phi\big(t^{-1}\big) {\: \rm d} t
		&\geq
		\sum_{j \geq 1}
		a_j
		\phi(2^j) 2^{-j-1} \\
		&\geq
		\frac{1}{8}
		\sum_{j \geq 1} a_j \mu(A_j) \log^2\bigg(\frac{e}{\mu(A_j)}\bigg) \log (j+1),
	\end{align*}
	which together with \eqref{eq:21} conclude the proof.
\end{proof}

\begin{bibliography}{discrete}
        \bibliographystyle{amsplain}
\end{bibliography}

\end{document}